\theoremstyle{plain}
\newtheorem{lemma}{Lemma}
\newcommand{\pmat}[1]{\begin{pmatrix}#1\end{pmatrix}} 
\newcommand{\Real}{\ensuremath{\mathbb{R}}} 
\newcommand{\Complex}{\ensuremath{\mathbb{C}}} 
\newcommand{\dia}[1]{\ensuremath{\diagm\left(#1\right)}} 
\newcommand{\adi}[1]{\ensuremath{\adiagmm\left(#1\right)}} 
\newcommand{\trace}[1]{\ensuremath{\tr\left( #1 \right)}} 
\newcommand{\rank}[1]{\ensuremath{\ra\left( #1 \right)}} 
\newcommand{\norm}[1]{\ensuremath{\left\| #1 \right\|}} 
\newcommand{\abs}[1]{\ensuremath{\left| #1 \right|}}
\newcommand{\inp}[2]{\ensuremath{\langle #1 , #2 \rangle}} 
\newcommand{\vect}[1]{\ensuremath{\ve\left( #1 \right)}}
\newcommand{\expp}[1]{\ensuremath{e^{ #1}}}
\newcommand{\fourier}[1]{\ensuremath{\mathcal{F}\left\{ #1 \right\}}}
\newcommand{\bigO}[1]{\ensuremath{\mathcal{O}\left( #1 \right)}}
\newcommand{\cc}{j}
\newcommand{\re}[1]{ \mathcal{R}({#1}) }
\newcommand{\im}[1]{ \mathcal{I}({#1}) }
\newcommand{\aX}{\pmb{X}}
\newcommand{\aY}{\pmb{Y}}
\newcommand{\aZ}{\pmb{Z}}
\newcommand{\vecto}{{\ve}}
\newcommand{\veca}{{\mathbf{a}}}
\newcommand{\vecb}{{\mathbf{b}}}
\newcommand{\vecy}{{\mathbf{y}}}
\newcommand{\vecu}{{\mathbf{u}}}
\newcommand{\vecx}{{\mathbf{x}}}
\newcommand{\px}{{u}}
\newcommand{\py}{{v}}
\newcommand{\aperture}{{\mathbf{A}}}
\DeclareMathOperator*{\tr}{trace}
\DeclareMathOperator*{\ra}{rank}
\DeclareMathOperator*{\diagm}{d}
\DeclareMathOperator*{\adiagmm}{\widehat{d}}
\DeclareMathOperator*{\ve}{vect}
\DeclareMathOperator*{\argmin}{\arg\min}
\newtheorem{thm}{Theorem}[section]
\newtheorem{proposition}[thm]{Proposition}
\newtheorem{remark}[thm]{Remark}
\newcommand{\paren}[1]{\left(#1\right)}
\newcommand{\parennn}[1]{\left\{#1\right\}}
\DeclareMathOperator{\Id}{Id}
\DeclareMathOperator{\Fix}{\mathsf{Fix}\,}
\DeclareMathOperator{\dist}{dist}
\title{Solving large-scale general phase retrieval problems via a sequence of convex relaxations}
\author[1,*]{Reinier Doelman}
\author[1]{H.~Thao Nguyen}
\author[1]{Michel Verhaegen}
\affil[1]{Delft Center for Systems and Control, Delft University of Technology, The Netherlands}
\affil[*]{Corresponding author: r.doelman@tudelft.nl}
\date{}
\begin{document}

\maketitle

\begin{abstract}
We present a convex relaxation-based algorithm for large-scale general phase retrieval problems.
    General phase retrieval problems include i.a. the estimation of the phase of the optical field in the pupil plane based on intensity measurements of a point source recorded in the image (focal) plane.
    The non-convex problem of finding the complex field that generates the correct intensity is reformulated into a rank constraint problem.
    The nuclear norm is used to obtain the convex relaxation of the phase retrieval problem.
    A new iterative method, indicated as \emph{Convex Optimization-based Phase Retrieval} (COPR), is presented, with each iteration consisting of solving a convex problem.
    In the noise-free case and for a class of phase retrieval problems the solutions of the minimization problems converge linearly or faster towards a correct solution.
    Since the solutions to nuclear norm minimization problems can be computed using semidefinite programming, and this tends to be an expensive optimization in terms of scalability, we provide a fast ADMM algorithm that exploits the problem structure.
    The performance of the COPR algorithm is demonstrated in a realistic numerical simulation study, demonstrating its improvements in reliability and speed with respect to state-of-the-art methods.
\end{abstract}

\section{Introduction}
Recovery of a signal from several measured intensity patterns, also known as the \emph{phase retrieval problem}, is of great interest in optics and imaging.
Recently it was shown in \cite{Antonello15} that the problem of estimating the wavefront aberration from measurements of the point spread functions can be formulated as a phase retrieval problem.

In this paper, we consider the general phase retrieval problem \cite{SheEld15}:
\begin{equation*}
\mbox{find}\quad \mathbf{a} \in \mathbb{C}^{n_a} \mbox{ such that }
\vecy_i = | \vecu_i^H \veca |^2 \quad {\rm for}\;\; i=1,\ldots,n_y,
\end{equation*}
where $\vecy_i \in \mathbb{R}_+$ and $\vecu_i \in \mathbb{C}^{n_a}$ are known and $(\cdot)^H$ denotes the Hermitian transpose of a vector (matrix).
For brevity the following compact notation will be used in this paper to denote this general noise-free  phase retrieval problem:
\begin{equation}\label{ProblemG}
\mbox{find}\quad \mathbf{a} \in \mathbb{C}^{n_a} \mbox{ such that }
\vecy = | U \veca |^2,
\end{equation}
where $\vecy \in \Real_+^{n_y}$ are the measurements and $U \in \Complex^{n_y\times n_a}$ is the propagation matrix.
With noise on the measurements $y_i$, we consider the following related optimization problem:
\begin{equation}
	\begin{aligned}
    	& \underset{\veca \in \Complex^{n_a}}{\min}
        & & \norm{\vecy - \abs{U\veca}^2},
        \end{aligned} \label{eq:problemG2}
\end{equation}
where $\norm{\cdot}$ denotes a vector norm of interest.

The sparse variant of the phase retrieval problem corresponds to the case that the unknown parameter $\mathbf{a}$ is a sparse vector.
A special case of this problem is when the measurements are the magnitude of the Fourier transform of multiples of $\mathbf{a}$ with certain phase diversity patterns.
A number of algorithms utilizing the Fourier transform have been proposed for solving this class of phase retrieval problems \cite{fienup1982phase,LukBurLyo02,Gespar}.

The fundamental nature of \eqref{ProblemG} has given rise to a wide variety of solution methods that have been developed for specific variants of this problem since the observation of Sayre in 1952 that phase information of a scattered wave may be recovered from the recorded intensity patterns at and between Bragg peaks of a diffracted wave \cite{Sayre52}.
Direct methods \cite{Hauptman86} usually use insights about the crystallographic structure and randomization to search for the missing phase information.
The requirement of such a-priori structural information and the expensive computational complexity often limit the application of these methods in practice.

A second class of methods first devised by Gerchberg and Saxton \cite{GSF72} and Fienup \cite{fienup1982phase} can be described as variants of the method of alternating projections on certain sets defined by the constraints. For an overview of these methods and latter refinements we refer the reader to \cite{Bauschke02,LukBurLyo02}.

In \cite{candes2015phase} \eqref{ProblemG} is relaxed to a convex optimization problem.
The inclusion of the sparsity constraint in the same framework of convex relaxations has been considered in \cite{Ohlsson}.
However, as reported in \cite{Gespar} the combination of matrix lifting and semidefinite programming (SDP) makes this method not suitable for large-scale problems.
To deal with large-scale problems, the authors of \cite{Gespar} have proposed an iterative solution method, called GESPAR, which appears to yield promising recovery of very sparse signals.
However, this method consists of a heuristic search for the support of $\veca$ in combination with a variant of Gauss-Newton method, whose computational complexity is often expensive.
These algorithmic features are potential drawbacks of GESPAR.

In this paper, we propose a sequence of convex relaxations for the phase retrieval problem in \eqref{ProblemG}.
Contrary to existing convex relaxation schemes such as those proposed in \cite{candes2015phase,Ohlsson}, matrix lifting is not required in our strategy. The obtained convex problems are affine in the unknown parameter vector $\veca$.
Contrary to \cite{candes2013phaselift}, our strategy does not require the tuning of regularization parameters when the measurements are corrupted by noise.
We then present an ADMM-based algorithm that can solve the resulting optimization problems effectively.
This potentially addresses the restriction of current SDP-based methods to only relatively small-scale problems.

In Section~\ref{sec:wavefrontestimation} we formulate the estimation problem of our interest for both zonal and modal forms.
In Section~\ref{sec:algorithm} we propose an algorithm for solving this problem.
Since this algorithm is based on minimizing a nuclear norm, a computationally heavy minimization problem, we suggest an ADMM-based algorithm in Section~\ref{sec:admm} that exploits the problem structure.
This ADMM algorithm features two minimization problems whose solutions can be computed exactly and with complexity $\bigO{n_y n_a}$, where $n_y$ is the number of measurements and $n_a$ is the number of unknown variables.

Analytic solutions for the ADMM algorithm update steps will be presented in Subsections~\ref{sec:aupdate} and \ref{sec:Xupdate}.
The convergence behaviour of the algorithm proposed in Section~\ref{sec:algorithm} is analysed in Section~\ref{sec:convergence}.
In Sections~\ref{sec:numericalexperiments} we describe and discuss the results of a number of numerical experiments that demonstrate the promising performances of our algorithms.
We end with concluding remarks in Section~\ref{sec:remarks}.

\section{Wavefront estimation from intensity measurements}\label{sec:wavefrontestimation}
The problem of phase retrieval from the point spread function images can be approached from 2 directions. We first describe the problem in zonal form, and then in modal form.

\subsection{Problem formulation in zonal form}
\label{subsec:zonal form}

In \cite{Antonello15} it was shown that reconstructing the wavefront from CCD recorded images of a point source may also be formulated as a phase retrieval problem.
These recorded images are called {\em point spread functions (PSFs)}.
As such approaches avoid the requirement of extra hardware to sense the wavefront, such as a Shack-Hartmann wavefront sensor, the problem is relevant and summarized here.

The PSF is derived from the magnitude of the Fourier transform of the generalized pupil function (GPF).
For an aberrated optical system the GPF is defined as the complex valued function \cite{goodman2008introduction}:
\begin{equation}
 \label{eq:GPF}
  P(\rho,\theta) = \aperture(\rho,\theta)\expp{\cc \phi(\rho,\theta)},
\end{equation}
where $\rho$ (radius) and $\theta$ (angle) specify the normalized polar coordinates in the exit pupil plane of the optical system.
In \eqref{eq:GPF}, $\mathbf{A}(\rho,\theta)$ is the amplitude apodisation function and $\phi(\rho,\theta)$ is the phase aberration function.

The aim of the wavefront reconstruction problem is to estimate $\phi(\rho,\theta)$.
Once this phase aberration of an optical system has been estimated, it can be corrected by using phase modulating devices such as deformable mirrors.

In order to estimate $\phi(\rho,\theta)$, a known phase diversity pattern $\phi_d(\rho,\theta)$ can be introduced (e.g., by using a deformable mirror) to transform the GPF in a controlled manner into the aberrated GPF:
\begin{equation}\label{eq:GPFd}
  P_d(\rho,\theta)
  = \mathbf{A}(\rho,\theta) \expp{\cc \phi(\rho,\theta)} \expp{\cc \phi_d(\rho,\theta)}.
\end{equation}
The noise-free intensity pattern of  $P_d(\rho,\theta)$ measured at the image plane is denoted
\begin{equation}\label{eq:Intensity_d}
\vecy_d = \abs{\fourier{ \aperture(\rho,\theta) \expp{\cc \phi(\rho,\theta)} \expp{\cc \phi_d(\rho,\theta)} }  }^2.
\end{equation}
If we sample the function $P_d(\rho,\theta)$ at points corresponding to a square grid of size $m \times m$ on the pupil plane, then  $\aperture(\rho,\theta)$, $\phi_d(\rho,\theta)$ and $\phi(\rho,\theta)$ are square matrices of that size.

Let us define $\vecto(\cdot)$ the vectorization operator such that $\vecto(Z)$ yields the vector obtained by stacking the columns of matrix $Z$ into a column vector.
The inverse operator $\vecto^{-1}(\cdot)$, which maps a column vector of size $m^2$ to a square matrix of size $m \times m$, is also well defined. Let in particular the matrix $Z$ and the vector $\veca$ be defined as:
\begin{equation*}
	Z = \aperture(\rho,\theta) e^{j\phi(\rho,\theta)} \in \mathbb{C}^{m \times m},\quad  \veca = \vecto(Z) \in \mathbb{C}^{m^2}. \label{eq:unknownpupilF}
\end{equation*}
With the definition of the vector $\mathbf{p}_d$:
\begin{equation*}
	\mathbf{p}_d  = \vect{e^{j\phi_d(\rho,\theta)}}\in \mathbb{C}^{m^2},
\end{equation*}
and with $D_d = \dia{\mathbf{p}_d} \in \mathbb{C}^{m^2\times m^2} $ the diagonal matrix with diagonal entries taken from the vector $\mathbf{p}_d$, we can write the noise-free intensity measurements in \eqref{eq:Intensity_d} as
\begin{equation*}
     \vecy_d = \abs{ \fourier{ \expp{\cc \phi_d(\rho,\theta)} Z}  }^2
     = \abs{ \fourier{ \vecto^{-1}(D_d\mathbf{a})}  }^2.
\end{equation*}
As the Fourier transform is a linear operator, we can write our noise-free intensity measurements in the form:
\begin{equation}\label{eq:Intensity_f}
	\vecy_d = \left| U_d \veca \right|^2,
\end{equation}
where in this case $U_d$ is a unitary matrix.

By stacking the vectors $\vecy_d$ and the matrices $U_d$, obtained from the $n_d$ images with $n_d$ different phase diversities, correspondingly into the vector $\vecy$ and the matrix $U$ (of size $n_d m^2 \times m^2$), the problem of finding $\veca$ from noise-free intensity measurements can be formulated as in \eqref{ProblemG} and that from noisy measurements can be formulated as in \eqref{eq:problemG2} for $n_a=m^2$ and $n_y=n_dm^2$.

It is worth noting that the dimension of the unknown $\veca$ with $m$ in the range of a couple of hundreds turns this problem into a non-convex large-scale optimization problem. For such  a problem the implementation of PhaseLift \cite{candes2013phaselift} using standard semidefinite programming, using libraries like MOSEK \cite{mosek}, will not be tractable because of the large matrix dimensions of the unknown quantity.
If we assume that the computational complexity of semidefinite programming with matrix constraints of size $n \times n$ increases with $\bigO{n^6}$ \cite{vandenberghe2005interior}, then a naive implementation of the PhaseLift method applied to \eqref{eq:problemG2} involving a single image has worst-case computational complexity of $\bigO{m^{12}}$.

\subsection{Problem formulation in modal form}\label{sec:modal}
In general, only approximate solutions can be expected for a phase retrieval problem.
In the modal form of the phase retrieval problem, also considered in \cite{Antonello15} for extended Nijboer-Zernike (ENZ) basis functions, the GPF is assumed to be well approximated by a weighted sum of basis functions.
We make use of real-valued radial basis functions \cite{martinez2016computation} with complex coefficients to approximate the GPF. These are studied in the scope of wavefront estimation in \cite{Piet17} and an illustration of these basis function on a $4 \times 4$ grid in the pupil plane is given in Figure~\ref{fig:bf}.
\begin{figure}[ht]
	\centering
    \includegraphics[width=0.5\columnwidth]{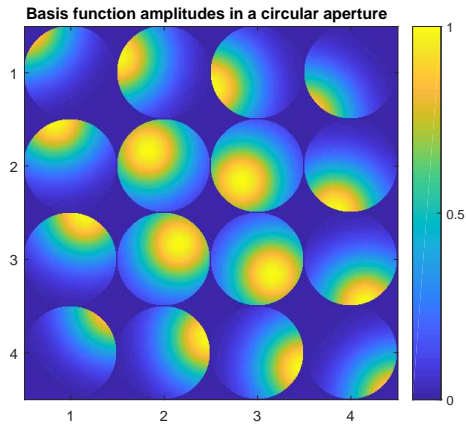}
    \caption{16 radial basis functions with centers in a $4 \times 4$ grid, with circular aperture support.}
    \label{fig:bf}
\end{figure}

Switching from the polar coordinates $(\rho,\theta)$ to the Cartesian coordinates $(x,y)$ in the pupil plane, let us consider the radial basis functions and the approximate GPF given by
\begin{equation}
	\begin{aligned}
	G_i(x,y)	&=
    \chi(x,y)\expp{-\lambda_i \left((x-x_i)^2 + (y-y_i)^2\right) }, \\
    P(x,y) 		&\approx \widetilde{P}(x,y,\veca) = \sum_{i=1}^{n_a} a_i G_i(x,y),
    \end{aligned} \label{eq:basis}
\end{equation}
where $(x_i,y_i)$ are the centers of basis functions $G_i(x,y)$, $a_i \in \Complex$, $\lambda_i \in \Real_+$ determines the spread of that function, $\chi(x,y)$ denotes the support of the aperture, and $\veca$ is the coefficient parameter vector to be estimated. The parameters $\lambda_i$ are usually taken equal for all basis functions and for their tuning we refer to \cite{Piet17}.

The aberrated GPF corresponding to the introduction of phase diversity $\phi_d$ is
\begin{equation}\label{eq:aGPFd}
	\widetilde{P}_d(x,y,\veca,\phi_d) = \sum_{i=1}^{n_a} a_i G_i(x,y) \expp{j\phi_d(x,y)}.
\end{equation}

The normalized complex PSF is the 2-dimensional Fourier transform of the GPF
\cite{janssen2002extended, braat2002assessment}.
The aberrated PSF corresponding to the aberrated GPF in \eqref{eq:aGPFd} is given as
\begin{equation}\label{eq:aPSF}
	p_d(\px,\py) = \sum_{i=1}^{n_a} a_i \fourier{G_i(x,y) \expp{j\phi_d(x,y)} } = \sum_{i=1}^{n_a} a_i U_{d,i}(\px,\py),
\end{equation}
where $(\px,\py)$ are the Cartesian coordinates in the image plane of the optical system.

We now drop the dependency on the  coordinates and vectorize expression \eqref{eq:aPSF} for all $n_d$ diversities that have been applied to obtain the following compact form of a single matrix-vector multiplication,
\begin{equation}\label{p=Ua}
 \mathbf{p} = U \veca.
\end{equation}
The vector $\mathbf{p}$ is the obtained vectorization and combination over all the aberrated PSFs, and the matrix $U$ is the vectorized and concatenated version of the functions
 $U_{d,i}$ sampled on a grid of size $m \times m$.

Let the intensity of the PSFs be recorded on the corresponding grid of pixels of size $m \times m$, and let the vectorization of this intensity pattern for different phase diversities be concatenated into the vector $\vecy$.
We can again formulate the problem of finding $\veca$ from noise-free intensity measurements as in \eqref{ProblemG} and from noisy measurements as in \eqref{eq:problemG2}
for $n_y = m^2n_d$.

It is worth noting that the dimension of $\veca$ is not dependent on the size of the sample grid (the size of the problem).
This is the fundamental advantage of the modal form formulation over the zonal form one, for which the size of $\veca$ directly depends on the size of the problem, i.e. $n_a=m^2$.

In this paper two steps are combined to deal with the large-scale nature of optimization \eqref{eq:problemG2}:
\begin{enumerate}
	\item The unknown pupil function $P(\rho,\theta)$ can be represented as a linear combination of a number of basis functions.
    In \cite{Antonello15} use has been made of the ENZ basis functions, while in \cite{Piet17} use is made of radial basis functions instead of ENZ ones.
    The radial basis functions are used here as \cite{Piet17} demonstrated their advantages over the ENZ type.
 	\item A new strategy is proposed for solving optimization \eqref{ProblemG} via a sequence of convex optimization problems.
    Each of the subproblems can be solved effectively by an iterative ADMM algorithm that exploits the problem structure.
\end{enumerate}

In the following we assume that the problem is normalized such that all entries of $\vecy$ have values between 0 and 1.

\section{The COPR algorithm}\label{sec:algorithm}

Equation \ref{ProblemG} is equivalent to a rank constraint.
Define the matrix-valued function
\begin{equation}
	M(A,B,C,X,Y) = \pmat{C+AY+XB+XY & A+X \\ B+Y & I}, \label{eq:M}
\end{equation}
where $I$ is the identity matrix of appropriate size. 
Let $\vecb \in \Complex^{n_a}$ be a coefficient vector.
For notational convenience, we will denote
\begin{equation*}
	\begin{aligned}
	&M(U,\veca,\vecb,\vecy) = \\ 
    &M\left(\dia{\veca^HU^H},\dia{U\veca},\dia{\vecy},\dia{\vecb^HU^H},\dia{U\vecb}\right).
	\end{aligned}
\end{equation*}

Our proposed algorithm in this paper relies on the following fundamental result.

\begin{lemma}\label{lem:rank}\cite{doelman2016sequential}
For any $\vecb \in \Complex^{n_a}$, the constraint $\vecy = \abs{U\veca}^2$ is equivalent to the constraint
\begin{equation*}
	\rank{M(U,\veca,\vecb,\vecy)} = n_y.
\end{equation*}
\end{lemma}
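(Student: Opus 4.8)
The plan is to exploit the $2\times 2$ block structure of $M$ in \eqref{eq:M} together with the fact that its lower-right block is the identity, which is invertible. Because of this I would invoke the Schur-complement rank formula (Guttman rank additivity): for any block matrix $\pmat{P & Q \\ R & I}$ whose identity block has size $n_y$, the factorization $\pmat{P & Q \\ R & I} = \pmat{I & Q \\ 0 & I}\pmat{P-QR & 0 \\ R & I}$ holds, and since the first factor is invertible one gets $\rank{\pmat{P & Q \\ R & I}} = n_y + \rank{P-QR}$. Here $A,B,C,X,Y$ are all $n_y\times n_y$ diagonal matrices, so $M$ is $2n_y\times 2n_y$ and the claim reduces to controlling the rank of the Schur complement $S = (C+AY+XB+XY) - (A+X)(B+Y)$.

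The key algebraic step, and the point at which the particular form of $M$ pays off, is that this Schur complement telescopes. Expanding by distributivity, $(A+X)(B+Y) = AB + AY + XB + XY$, so every cross term cancels against its counterpart in the top-left block and $S = C - AB$. No commutativity is needed for this, and, crucially, $S$ is independent of $X$ and $Y$, hence of the auxiliary vector $\vecb$; this is precisely what underlies the phrase ``for any $\vecb$'' in the statement.

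It then remains to substitute the diagonal data. With $A = \dia{\veca^HU^H}$ and $B = \dia{U\veca}$, and noting that the entries of $\veca^HU^H = (U\veca)^H$ are the conjugates $\overline{(U\veca)_i}$, the product of these two diagonal matrices has $i$-th diagonal entry $\overline{(U\veca)_i}(U\veca)_i = \abs{(U\veca)_i}^2$, so $AB = \dia{\abs{U\veca}^2}$. Together with $C = \dia{\vecy}$ this gives $S = \dia{\vecy - \abs{U\veca}^2}$, a diagonal matrix.

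Finally I would read off the equivalence. Since $S$ is diagonal, $\rank{S}$ equals the number of nonzero entries of $\vecy - \abs{U\veca}^2$, so $\rank{S}=0$ exactly when $\vecy = \abs{U\veca}^2$. Combining this with $\rank{M} = n_y + \rank{S}$ yields $\rank{M} = n_y$ if and only if $\vecy = \abs{U\veca}^2$, as claimed. I expect the only delicate point to be the entrywise identification $\dia{\veca^HU^H}\,\dia{U\veca} = \dia{\abs{U\veca}^2}$, where one must correctly track the Hermitian conjugation hidden in $\veca^H U^H$; everything else is the Schur-complement bookkeeping, which becomes routine once the cancellation is spotted.
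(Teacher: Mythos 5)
Your proof is correct. Note that the paper contains no proof of Lemma~\ref{lem:rank} at all---it is imported verbatim from the cited reference \cite{doelman2016sequential}---so there is no in-paper argument to diverge from; your route (Guttman rank additivity over the invertible identity block, the distributivity cancellation $(A+X)(B+Y)=AB+AY+XB+XY$ leaving the Schur complement $C-AB=\dia{\vecy-\abs{U\veca}^2}$ independent of $\vecb$, then counting nonzero diagonal entries) is the standard and almost certainly the intended argument, and it correctly explains the ``for any $\vecb$'' clause.
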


For addressing problem \eqref{eq:problemG2}, Lemma~\ref{lem:rank} suggests a consideration of the following approximate problem, for a user-selected parameter vector $\vecb$,
\begin{equation} \label{min rank}
	\min_{\veca \in \Complex^{n_a}} \rank{M(U,\veca,\vecb,\vecy)}.
\end{equation}

Since \eqref{min rank} is a non-convex problem and to anticipate the presence of measurement noise, we propose to solve the following convex optimization problem:
\begin{equation}
	\min_{\veca \in \Complex^{n_a}} f(\veca) := \norm{M(U,\veca,\vecb,\vecy)}_*, \label{EQ:NN}
\end{equation}
where $\norm{\cdot}_*$ denotes the nuclear norm of a matrix, the sum of its singular values \cite{recht2010guaranteed}.

In the case that prior knowledge on the problem indicates that
$\veca$ is a sparse vector, the objective function in \eqref{EQ:NN} can easily be extended with an $\ell_1$-regularization to stimulate sparse solutions, since the vector $\veca$ appears affinely in $M(U,\veca,\vecb,\vecy)$:
\begin{equation}
	\min_{\veca \in \Complex^{n_a}} f(\veca) + \lambda \norm{\veca}_1, \label{eq:sparse}
\end{equation}
for some regularization parameter $\lambda$.

Note that for $\vecb = -\veca$,
\begin{equation}
	\norm{M(U,\veca,-\veca,\vecy)}_* = \norm{\vecy - \abs{U\veca}^2}_1 + n_y.
\end{equation}

Since the result of optimization \ref{EQ:NN} might not produce a desired solution sufficiently fitting the measurements, we propose the iterative Convex Optimization-based Phase Retrieval (COPR) algorithm, outlined in Algorithm~\ref{ALG:COPR}.

\begin{algorithm}
\caption{Convex Optimization-based Phase Retrieval (COPR)}\label{ALG:COPR}
\begin{algorithmic}[1]
\Procedure{COPR}{$\vecb,\tau$}\Comment{Some guess for $\vecb$}
\While{$\norm{\vecy - \abs{U\veca}}_1 > \tau$}\Comment{Termination criterion}
\State $\veca_+ \in \argmin_\veca \norm{M(U,\veca,\vecb,\vecy)}_*$
\State $\vecb_+\gets -\veca_+$
\EndWhile
\EndProcedure
\end{algorithmic}
\end{algorithm}

The nuclear norm minimization in Algorithm~\ref{ALG:COPR} is the main computational burden for an implementation.
Usual implementations of the nuclear norm involve semidefinite constraints, and require a semidefinite optimization solver.
If we assume that their computational complexity increases with $\bigO{n^6}$ \cite{vandenberghe2005interior} with constraint on matrices of size $n \times n$, then minimizing the nuclear norm of the matrix $M(U,\veca,\vecb,\vecy)$ of size $2n_y \times 2n_y$ is computationally infeasible even for
relatively
small-scale problems.
Therefore, we propose a tailored ADMM algorithm of which the computational complexity of the iterations scales $\bigO{n_y n_a}$, and requires the inverse of a matrix of size $2n_a \times 2n_a$ for every iteration of Algorithm \ref{ALG:COPR}.

\section{Efficient computation of the solution to \eqref{EQ:NN}}\label{sec:admm}

The minimization problem \eqref{EQ:NN} can be reformulated as:
\begin{align}\label{eq:admmopt}
\underset{X,\veca}{\min}\;
 \norm{X}_*\quad  \mbox{ subject to }\quad
X = M(U,\veca,\vecb,\vecy).
\end{align}

Applying the ADMM optimization technique \cite{boyd2011distributed} to the constraint optimization problem \eqref{eq:admmopt}, we obtain the steps in Algorithm~\ref{alg:admm}.

\begin{algorithm}
\caption{An ADMM algorithm for solving \eqref{eq:admmopt}}\label{alg:admm}
\begin{algorithmic}[1]
\Procedure{NN-ADMM}{$\vecb,\vecy,\rho,\tau$}
  \State $\veca \gets -\vecb$
  \State $\aX \gets M(U,\veca,\vecb,\vecy)$
  \State $\aY \gets 0 $
  \While{$\abs{\norm{M(U,\veca_{+},\vecb,\vecy)}_* - \norm{M(U,\veca,\vecb,\vecy)}_*} > \tau$}
    \State $\veca_{+} \in $
      \begin{equation}
          \underset{\veca}{\argmin} \norm{\aX - M(U,\veca,\vecb,\vecy) + \frac{1}{\rho}\aY}_F^2 \label{eq:aupdate}
      \end{equation}
    \State $\aX_{+} \in $
      \begin{equation}
        \underset{\aX}{\argmin} \norm{\aX}_* + \dfrac{\rho}{2}\norm{\aX - M(U,\veca_{+},\vecb,\vecy) + \dfrac{1}{\rho}\aY}_F^2 \label{eq:Xupdate}
      \end{equation}
    \State $\aY_{+} \gets \aY + \rho\left(\aX_{+} - M(U,\veca_{+},\vecb,\vecy)\right)$
    \State update $\rho$ according to the rules in \cite{boyd2011distributed}
  \EndWhile
  \EndProcedure
\end{algorithmic}
\end{algorithm}

The advantage of using this ADMM formulation is that both of the update steps \eqref{eq:aupdate} and \eqref{eq:Xupdate} have solutions that can be computed analytically.
The efficient computation of the solutions are described in the following two subsections.

\subsection{Efficient computation of the solution to \eqref{eq:aupdate}}\label{sec:aupdate}

Upon inspection of \eqref{eq:aupdate}, we see that this is a complex-valued standard least squares problem since $M(U,\veca,\vecb,\vecy)$ is parameterized affinely in $\veca$.
Let $\re{\cdot}$ and $\im{\cdot}$ respectively denote the real and the imaginary parts of a complex object.
Let the subscripts $(\cdot)_1$, $(\cdot)_2$ and $(\cdot)_3$ respectively denote the top-left, top-right and bottom-left submatrices according to \eqref{eq:M}.
Define 
\begin{equation*}
	\aZ = \aX + \dfrac{1}{\rho}\aY, \quad X = \dia{b^HU^H}.
\end{equation*}
In the sequel, let $\adi{P}$ denote the vector with the diagonal entries of a square matrix $P$.

Reordering the elements in \eqref{eq:aupdate}, separating the real and the imaginary parts, removing all matrix elements in the argument of the Frobenius norm that do not depend on $\veca$, and vectorizing the result, give the following least squares problem:
\begin{equation}
	\min_\vecx \norm{\vecu_{ADMM} - \vecu_{COPR} - AB\vecx}_2^2. \label{eq:ls}
\end{equation}
The variables $\vecu_{ADMM},~\vecu_{COPR},~A,~B$ and $\vecx$ are given by
\begin{equation}
  \begin{aligned}
      &\vecu_{ADMM} = \pmat{\adi{\re{\aZ_{1}}} \\ \adi{\re{\aZ_{2}}} \\ \adi{\re{\aZ_{3}}} \\ \adi{\im{\aZ_{2}}} \\ \adi{\im{\aZ_{3}}} },
      &&& &\vecu_{COPR} = \pmat{\vecy
      +\adi{\abs{X}^2} \\ \adi{\re{X}} \\ \adi{\re{X}} \\ \adi{\im{X}} \\ -\adi{\im{X}}}, \\
      &A = \pmat{2\re{X} & 2\im{X} \\ I & 0 \\ I & 0 \\ 0 & I \\ 0 & -I},
      &&& &B = \pmat{\re{U} & - \im{U} \\ -\im{U} & -\re{U}},
  \end{aligned}
\end{equation}
and $\vecx = \pmat{\re{\veca}^T & \im{\veca}^T}^T$.
This means that the optimal solution to \eqref{eq:ls} is given by 
\begin{equation*}
	\vecx^* = (B^TA^TAB)^{-1}B^TA^T(\vecu_{ADMM} - \vecu_{COPR}).
\end{equation*}
During the ADMM iterations only $\vecu_{ADMM}$ changes. The inverse $ (B^TA^TAB)^{-1} $ has to be computed once for every iteration of Algorithm~\ref{ALG:COPR} (i.e. it remains constant throughout the ADMM iterations).
Since the complexity of computing an inverse is $\bigO{n^3}$ for matrices of size $n \times n$, the computational complexity of this inverse process scales cubically with the number of basis functions.

Once this inverse matrix is obtained, the optimal solution to the least squares problem in \eqref{eq:ls} can be computed by a simple matrix-vector multiplication, whose complexity scales with $\bigO{n_yn_a}$.

Note that in the case that the objective term includes regularization as in \eqref{eq:sparse}, the optimization \eqref{eq:ls} should be modified appropriately to include the additive regularization term $\lambda\norm{\veca}_1$.

\subsection{Efficient computation of the solution to \eqref{eq:Xupdate}}\label{sec:Xupdate}
The optimization in \eqref{eq:Xupdate} is of the form
\begin{equation}
\underset{X}{\argmin} \norm{X}_* + \lambda\norm{X-C}_F^2. \label{eq:Xupdate_simple}
\end{equation}
Let $C= U_C\Sigma_CV_C^T$ be the singular value decomposition of $C \in \Complex^{2n_y \times 2n_a}$.
\begin{lemma}\label{lem:singular_vectors}
The solution $\aX$ to \eqref{eq:Xupdate_simple} has singular vectors $U_C$ and $V_C$.
\end{lemma}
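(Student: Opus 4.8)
The plan is to exploit the unitary invariance of both terms in the objective of \eqref{eq:Xupdate_simple} to reduce the problem to a diagonal one, from which the optimal singular vectors are read off directly. The nuclear norm $\norm{X}_*$ is invariant under left and right multiplication by unitary matrices, and the Frobenius norm $\norm{X - C}_F$ is similarly invariant under the joint transformation $X \mapsto U^H X V$, $C \mapsto U^H C V$. This strongly suggests that the minimizer should share its singular vectors with $C$; the task is to make this rigorous.

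First I would perform the change of variables $\widetilde{X} = U_C^H X V_C$, so that $U_C^H C V_C = \Sigma_C$ becomes the (nonnegative, real) diagonal matrix of singular values of $C$. Because $\norm{\widetilde{X}}_* = \norm{X}_*$ and $\norm{\widetilde{X} - \Sigma_C}_F = \norm{X - C}_F$ by unitary invariance, the original problem is equivalent to minimizing $\norm{\widetilde{X}}_* + \lambda\norm{\widetilde{X} - \Sigma_C}_F^2$ over all $\widetilde{X}$. It therefore suffices to show that the optimal $\widetilde{X}$ for this reduced problem is itself diagonal with nonnegative entries; pulling back via $X = U_C \widetilde{X} V_C^H$ then shows that the optimal $X$ has left singular vectors $U_C$ and right singular vectors $V_C$, which is exactly the claim.

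The key step is the diagonalization argument for the reduced problem. Here I would use the classical lower bound relating the nuclear norm and the trace, namely von Neumann's trace inequality, which gives $\norm{\widetilde{X}}_* = \max_{W}\, \re\,\trace{W^H \widetilde{X}}$ over contractions $W$, and in particular $\norm{\widetilde{X}}_* \ge \re\,\trace{\widetilde{X}}$ applied to the diagonal. Combined with the fact that the Frobenius term $\norm{\widetilde{X} - \Sigma_C}_F^2$ decomposes entrywise and is only worsened by any off-diagonal entries of $\widetilde{X}$, one argues that replacing $\widetilde{X}$ by the diagonal matrix formed from its diagonal entries (or more carefully, by a suitable thresholding of the singular values of $\Sigma_C$) can only decrease the objective. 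Since $\Sigma_C$ is already diagonal, the Frobenius penalty couples $\widetilde{X}$ to $\Sigma_C$ only through matching diagonal positions, so the unique minimizer is diagonal.

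I expect the main obstacle to be handling the off-diagonal entries cleanly: one must verify that zeroing them strictly reduces (or at least does not increase) the full objective, and this requires the nuclear-norm inequality to go the right way simultaneously with the Frobenius decrease. The cleanest route is to invoke the known closed form of the proximal operator of the nuclear norm (singular value soft-thresholding): the minimizer of $\norm{X}_* + \lambda\norm{X-C}_F^2$ is $U_C \mathcal{S}_{1/(2\lambda)}(\Sigma_C) V_C^H$, where $\mathcal{S}$ denotes soft-thresholding of the singular values. That this map returns a matrix with the same singular vectors as $C$ is precisely the content of the lemma, and the diagonalization argument above is what establishes the soft-thresholding form in the first place. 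I would present the unitary-invariance reduction as the conceptual core and cite the soft-thresholding result for the final identification of the singular values.
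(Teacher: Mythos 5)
Your proposal is correct, and it rests on the same engine as the paper's proof---von Neumann's trace inequality together with the unitary invariance of the nuclear and Frobenius norms---but the two arguments are organized differently. The paper works directly in the original variable: it expands $\norm{X-C}_F^2$, bounds the cross term $\inp{X}{C}$ above by $\trace{\Sigma_X\Sigma_C}$ via von Neumann, and observes that the resulting lower bound on the objective is attained exactly when $X$ and $C$ are simultaneously unitarily diagonalizable, so the optimum shares $C$'s singular vectors; no change of variables and no explicit treatment of off-diagonal entries is needed. You instead first rotate by $U_C$ and $V_C$ to reduce to the case where $C$ is the diagonal matrix $\Sigma_C$, and then show the minimizer of the reduced problem is diagonal by a pinching argument. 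That step, which you flag as the main obstacle, does go through: the Frobenius term decomposes entrywise, and for the nuclear-norm side you need $\sum_i \abs{\widetilde{X}_{ii}} \le \norm{\widetilde{X}}_*$, which follows from your dual characterization by taking $W$ to be the diagonal unitary of phases of the entries $\widetilde{X}_{ii}$ (not merely $W=I$, which only handles real nonnegative diagonals); combined with strict convexity of the objective (which gives uniqueness and so forces the minimizer itself, not just some minimizer, to be diagonal with nonnegative entries) this completes the argument, and is in fact somewhat more careful about the equality case than the paper's one-line assertion. Your fallback of citing the known closed-form proximal operator of the nuclear norm (singular value soft-thresholding) is legitimate as a literature reference, but, as you yourself note, it assumes the full result of which this lemma is the first half; the paper deliberately keeps the two halves separate, using this lemma to fix the singular vectors and only afterwards solving the decoupled scalar problems to obtain the thresholded singular values.
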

\begin{proof}
Let
$X = U_X\Sigma_XV_X^T$ be a singular value decomposition of $X$.
Then
\begin{equation*}
	\begin{aligned}
		\norm{X}_* + \lambda\norm{X-C}_F^2 &= \trace{\Sigma_X} + \\
        &\qquad \lambda\left(\inp{X}{X} + \inp{C}{C} -2\inp{X}{C}\right).
	\end{aligned}
\end{equation*}
Using Von Neumann's trace inequality we get
\begin{equation*}
	\begin{aligned}
		& \min_{X} \paren{ \trace{\Sigma_X} + \lambda\left(\inp{X}{X} + \inp{C}{C} -2\inp{X}{C}\right)} \\
        \geq\; &\min_{X} \paren{\trace{\Sigma_X} + \lambda\left(\inp{X}{X} + \inp{C}{C} -2\trace{\Sigma_X\Sigma_C}\right)}\\
	\end{aligned}
\end{equation*}
with equality holds true when $C$ and $X$ are simultaneously unitarily diagonalizable.
The optimal solution $\aX$ to \eqref{eq:Xupdate_simple} therefore has the same singular vectors as $C$, i.e. $U_{\aX}=U_C,~ V_{\aX}=V_C$.
\end{proof}

Denote the singular values of $C$ in descending order as $\sigma_{C,1},\ldots,\sigma_{C,2n_y}$, and those of $X$ similarly.
Thanks to Lemma~\ref{lem:singular_vectors}, \eqref{eq:Xupdate_simple} can be simplified to
\begin{equation}\label{prob:sigma}
	\underset{\sigma_{X,i}}{\argmin} \sum_{i=1}^{2n_y}
    \paren{\sigma_{X,i} + \lambda\left(\sigma_{X,i} - \sigma_{C,i}\right)^2}.
\end{equation}

This problem is completely decoupled in $\sigma_{X,i}$ and the optimal solution to \eqref{prob:sigma} is computed with
\begin{equation*}
	\sigma_{\aX,i} = \max\left(0, \sigma_{C,i} - \frac{1}{2\lambda}\right),\quad i = 1,\ldots,2n_y.
\end{equation*}
By row and column permutations, the matrix $C$ is block-diagonal with blocks of size $2 \times 2$.
The SVD of this permuted matrix therefore involves block-diagonal matrices $U_C$, $\Sigma_C$ and $V_C$ and these blocks can be obtained separately and in parallel. Since the blocks are of size $2 \times 2$, the SVD can be obtained analytically.

This shows that a valid SVD can be computed very efficiently, in $\bigO{1}$.
That is, in theory, in a computation time independent of the number of pixels in the image, the number of images taken or of the number of basis functions.

\section{Convergence analysis of Algorithm \ref{ALG:COPR}}\label{sec:convergence}

Algorithm \ref{ALG:COPR} can be reformulated as a Picard iteration $\mathbf{a}_{k+1} \in T(\mathbf{a}_k)$, where the fixed point operator $T:\mathbb{C}^{n_a}\to \mathbb{C}^{n_a}$ is given by
\begin{equation}\label{T:operator}
  T(\mathbf{a}) = \arg\min_{\substack{\mathbf{x}\in \mathbb{C}^{n_a}}}\;\norm{M(U,\mathbf{x},-\mathbf{a},\mathbf{y})}_*.
\end{equation}

Our subsequent analysis will show that the set of fixed points, $\Fix T$, of $T$ is in general nonconvex and as a result, iterations generated by $T$ can not be \emph{Fej\'er monotone} \cite[Definition 5.1 of]{BauCom11} with respect to $\Fix T$.
Therefore, the widely known convergence theory based on the properties of \emph{Fej\'er monotone operators} and \emph{averaging operators} is not applicable to the operator $T$ given at \eqref{T:operator}.

In this section, we make an attempt to prove convergence of Algorithm \ref{ALG:COPR}, which has been observed from our numerical experiments, via a relatively new developed convergence theory based on the theory of \emph{pointwise almost averaging operators} \cite{LukNguTam16}.
It is worth mentioning that we are not aware of any other analysis schemes addressing convergence of Picard iterations generated by general \emph{nonaveraging} fixed point operators.
Our discussion consists of two stages. Based on the convergence theory developed in \cite{LukNguTam16}, we first formulate a convergence criterion for Algorithm \ref{ALG:COPR} (Proposition \ref{p:convergence}) under rather abstract assumptions on the operator $T$.
Due to the highly complicated structure of the nuclear norm of a general complex matrix, we are unable to verify these mathematical conditions for general matrices $U$.
However, we will verify that they are well satisfied in the case that $U$ is a unitary matrix (Theorem \ref{T:MATRIX_GENERAL}).
From the latter result, we heuristically hope that Algorithm \ref{ALG:COPR} still enjoys the convergence result when the matrix $U$ is close to being unitary in a certain sense.

It is a common prerequisite for analyzing local convergence of a fixed point algorithm that the set of solutions to the original problem is nonempty.
That is, there exists $\mathbf{a}\in \mathbb{C}^{n_a}$ such that $\mathbf{y}=|U\mathbf{a}|^2$.
Before stating the convergence result, we need to verify that the fixed point set of $T$ is nonempty.
\begin{lemma}\label{LEM:FIX_NONEMPTY}
The fixed point operator $T$ defined at \eqref{T:operator} holds
\[
\parennn{\mathbf{a} \mid \mathbf{y} = |U\mathbf{a}|^2} \;\subseteq\; \Fix T := \left\{\mathbf{a}\in \mathbb{C}^{n_a}\mid \mathbf{a}\in T(\mathbf{a})\right\}.
\]
\end{lemma}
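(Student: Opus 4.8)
The plan is to show that every $\mathbf{a}$ satisfying $\mathbf{y}=|U\mathbf{a}|^2$ is a global minimizer of the map $g(\mathbf{x}):=\norm{M(U,\mathbf{x},-\mathbf{a},\mathbf{y})}_*$, since by \eqref{T:operator} the set $\Fix T$ consists exactly of those $\mathbf{a}$ that minimize $g$. I would establish this by a value-matching argument: compute $g(\mathbf{a})$ explicitly, then prove that this value is a uniform lower bound for $g$ over all $\mathbf{x}$.

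First I would evaluate $g$ at $\mathbf{x}=\mathbf{a}$. Here the first argument and the negative of the second argument coincide, so the identity $\norm{M(U,\mathbf{a},-\mathbf{a},\mathbf{y})}_*=\norm{\mathbf{y}-|U\mathbf{a}|^2}_1+n_y$ recorded just before Algorithm~\ref{ALG:COPR} applies; since $\mathbf{y}=|U\mathbf{a}|^2$ the first summand vanishes and $g(\mathbf{a})=n_y$. Equivalently, substituting $\mathbf{x}=\mathbf{a}$ collapses each $2\times2$ block to $\mathrm{diag}\!\left(y_i-|(U\mathbf{a})_i|^2,\,1\right)=\mathrm{diag}(0,1)$, of nuclear norm $1$, giving $n_y$ in total.

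Next I would establish the matching lower bound $g(\mathbf{x})\ge n_y$ for every $\mathbf{x}\in\mathbb{C}^{n_a}$. The structural fact I would exploit, already used in Subsection~\ref{sec:Xupdate}, is that all four blocks of $M$ in \eqref{eq:M} are diagonal for the arguments in question; hence a single symmetric row/column permutation brings $M(U,\mathbf{x},-\mathbf{a},\mathbf{y})$ into block-diagonal form with $n_y$ blocks $B_i(\mathbf{x})$ of size $2\times2$. Because permutations preserve singular values and the singular values of a block-diagonal matrix are the union of those of its blocks, the nuclear norm is additive: $g(\mathbf{x})=\sum_{i=1}^{n_y}\norm{B_i(\mathbf{x})}_*$. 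Each $B_i(\mathbf{x})$ has lower-right entry equal to $1$, inherited from the identity block of \eqref{eq:M}, and since the largest singular value of a matrix dominates the modulus of any of its entries, the leading singular value of $B_i(\mathbf{x})$ is at least $1$; thus $\norm{B_i(\mathbf{x})}_*\ge 1$, and summing over $i$ yields $g(\mathbf{x})\ge n_y$.

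Combining the two steps, $\mathbf{a}$ attains the global minimum $n_y$ of $g$, so $\mathbf{a}\in T(\mathbf{a})$; as $\mathbf{a}$ was an arbitrary solution of $\mathbf{y}=|U\mathbf{a}|^2$, the stated inclusion follows. I do not expect a substantive obstacle here: the only points requiring care are the additivity of the nuclear norm under the block-diagonalizing permutation and the uniform per-block bound on the top singular value, both of which are elementary once the all-diagonal block structure of $M$ is noted.
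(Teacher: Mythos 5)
Your proof is correct, but it takes a genuinely different route from the paper's. The paper argues through the rank function: since $M(U,\mathbf{a},-\mathbf{a},\mathbf{y}) = \pmat{0 & 0 \\ 0 & I_{n_y}}$ has rank $n_y$, the smallest value possible given the identity block, $\mathbf{a}$ is a global minimizer of $\rank{M(U,\mathbf{x},-\mathbf{a},\mathbf{y})}$ over $\mathbf{x}$; it then transfers this minimality to the nuclear norm by invoking the fact that the nuclear norm is the convex envelope of the rank, so that the two objectives share their global minimizers. You instead work directly with the nuclear norm via a value-matching argument: $g(\mathbf{a})=n_y$ by explicit evaluation, and $g(\mathbf{x})\ge n_y$ for all $\mathbf{x}$ because a symmetric permutation turns $M(U,\mathbf{x},-\mathbf{a},\mathbf{y})$ (whose four $n_y\times n_y$ blocks are all diagonal) into a direct sum of $2\times 2$ blocks, the nuclear norm is additive over such blocks, and each block's leading singular value is at least its $(2,2)$ entry, which equals $1$. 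What each approach buys: the paper's route is shorter and reuses the rank picture behind Lemma~\ref{lem:rank}, but its convex-envelope step is delicate --- Fazel's theorem identifies the nuclear norm as the convex envelope of the rank only on the unit ball of the spectral norm, and the matrices $M(U,\mathbf{x},-\mathbf{a},\mathbf{y})$ never lie in that ball (the identity block alone forces spectral norm at least $1$), so the transfer of minimizers as stated needs additional justification. Your argument is elementary and self-contained, sidesteps that issue entirely, and yields as a byproduct the exact optimal value $\min_{\mathbf{x}} \norm{M(U,\mathbf{x},-\mathbf{a},\mathbf{y})}_* = n_y$, consistent with the identity $\norm{M(U,\veca,-\veca,\vecy)}_* = \norm{\vecy - \abs{U\veca}^2}_1 + n_y$ recorded before Algorithm~\ref{ALG:COPR}; in that sense it is arguably the more rigorous of the two proofs.
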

\begin{proof}
See Appendix~\ref{APP:FIX_NONEMPTY}
\end{proof}

The next proposition provides an abstract convergence result for Algorithm \ref{ALG:COPR}.
$\Fix T$ is supposed to be closed.

\begin{proposition}\label{p:convergence}\cite[simplified version of Theorem 2.2 of]{LukNguTam16}
Let $S\subset \Fix T$ be closed with $T(\mathbf{a}^*) \subset\Fix T$ for all $\mathbf{a}^* \in S$ and let $W$ be a neighborhood of $S$. Suppose that $T$ satisfies the following conditions.
\begin{enumerate}
  \item[(i)]\label{t:subfirm convergence a} $T$ is \emph{pointwise averaging} at every point of $S$ with constant $\alpha\in (0,1)$ on $W$.
      That is, for all $\mathbf{a}\in W$, $\mathbf{a}_+\in T(\mathbf{a})$, $\mathbf{a}^*\in P_S(\mathbf{a})$ and $\mathbf{a}^*_+\in T(\mathbf{a}^*)$,
\begin{align}\label{averaged of T}
\norm{\mathbf{a}_+ - \mathbf{a}^*_+}^2 \le \norm{\mathbf{a}-\mathbf{a}^*}^2 - \frac{1-\alpha}{\alpha}\norm{(\mathbf{a}_+ - \mathbf{a})-(\mathbf{a}^*_+ - \mathbf{a}^*)}^2.
\end{align}
  \item[(ii)]\label{t:subfirm convergence b} The set-valued mapping $\psi:= T-\Id$ is \emph{metrically subregular} on $W$ for $0$ with constant $\gamma >0$, where $\Id$ is the Identity mapping.
  That is,
\begin{equation}\label{met_subreg}
\gamma\dist(\mathbf{a},\psi^{-1}(0)) \le \dist(0,\psi(\mathbf{a})),\quad \forall \mathbf{a}\in W.
\end{equation}
  \item[(iii)]\label{t:tech_assump} It holds $\dist(\mathbf{a},S) \le \dist(\mathbf{a},\Fix T)$ for all $\mathbf{a}\in W$.
\end{enumerate}
Then all Picard iterations $\mathbf{a}_{k+1}\in T(\mathbf{a}_k)$ starting in $W$ satisfy $\dist(\mathbf{a}_k,S)\to 0$ as $k\to \infty$ at least linearly.
\end{proposition}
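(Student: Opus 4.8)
The plan is to chain the three hypotheses into a single one-step contraction of the form $\dist(\mathbf{a}_{k+1},S)^2 \le c\,\dist(\mathbf{a}_k,S)^2$ with $c\in[0,1)$, and then iterate. First I would exploit the pointwise averaging property (i). Fix an iterate $\mathbf{a}_k\in W$, set $\mathbf{a}=\mathbf{a}_k$ and $\mathbf{a}_+=\mathbf{a}_{k+1}\in T(\mathbf{a}_k)$, and select $\mathbf{a}^*\in P_S(\mathbf{a}_k)$ so that $\norm{\mathbf{a}_k-\mathbf{a}^*}=\dist(\mathbf{a}_k,S)$ (nonempty since $S$ is closed). Because $\mathbf{a}^*\in S\subseteq \Fix T$ we have $\mathbf{a}^*\in T(\mathbf{a}^*)$, so the admissible choice $\mathbf{a}^*_+=\mathbf{a}^*$ annihilates the term $\mathbf{a}^*_+-\mathbf{a}^*$ in \eqref{averaged of T}. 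Using also $\dist(\mathbf{a}_{k+1},S)\le\norm{\mathbf{a}_{k+1}-\mathbf{a}^*}$, the averaging inequality collapses to
\[
\dist(\mathbf{a}_{k+1},S)^2\le\dist(\mathbf{a}_k,S)^2-\frac{1-\alpha}{\alpha}\norm{\mathbf{a}_{k+1}-\mathbf{a}_k}^2 .
\]
In particular $\dist(\cdot,S)$ is nonincreasing along the orbit.

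Next I would turn the step length $\norm{\mathbf{a}_{k+1}-\mathbf{a}_k}$ into a distance to the fixed point set via (ii). The key identification is $\psi^{-1}(0)=\{\,\mathbf{a}\mid 0\in T(\mathbf{a})-\mathbf{a}\,\}=\Fix T$, together with the observation that $\mathbf{a}_{k+1}-\mathbf{a}_k\in\psi(\mathbf{a}_k)$, so that $\dist(0,\psi(\mathbf{a}_k))\le\norm{\mathbf{a}_{k+1}-\mathbf{a}_k}$. Metric subregularity \eqref{met_subreg} then yields
\[
\gamma\,\dist(\mathbf{a}_k,\Fix T)\le\dist(0,\psi(\mathbf{a}_k))\le\norm{\mathbf{a}_{k+1}-\mathbf{a}_k}.
\]
Condition (iii) supplies $\dist(\mathbf{a}_k,S)\le\dist(\mathbf{a}_k,\Fix T)$ (the reverse inequality being automatic from $S\subseteq\Fix T$, so the two distances in fact coincide on $W$), and hence $\gamma\,\dist(\mathbf{a}_k,S)\le\norm{\mathbf{a}_{k+1}-\mathbf{a}_k}$.

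Combining the two displays gives
\[
\dist(\mathbf{a}_{k+1},S)^2\le\left(1-\frac{(1-\alpha)\gamma^2}{\alpha}\right)\dist(\mathbf{a}_k,S)^2 ,
\]
so that with $c:=1-\frac{(1-\alpha)\gamma^2}{\alpha}$ one obtains $\dist(\mathbf{a}_k,S)\le c^{k/2}\dist(\mathbf{a}_0,S)\to 0$, i.e. at least linear convergence. Since $\alpha\in(0,1)$ and $\gamma>0$ we have $c<1$ immediately; moreover $c\ge 0$ is forced for free, because the averaging inequality already implies $\norm{\mathbf{a}_{k+1}-\mathbf{a}_k}^2\le\frac{\alpha}{1-\alpha}\dist(\mathbf{a}_k,S)^2$, and comparing this with the subregularity bound $\norm{\mathbf{a}_{k+1}-\mathbf{a}_k}^2\ge\gamma^2\dist(\mathbf{a}_k,S)^2$ shows $\gamma^2\le\frac{\alpha}{1-\alpha}$ whenever $\dist(\mathbf{a}_k,S)>0$.

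The step I expect to be the main obstacle is guaranteeing that every iterate stays in $W$, since all three hypotheses are assumed only there. The clean remedy is to take $W$ to be a tube $\{\,\mathbf{a}\mid\dist(\mathbf{a},S)\le\delta\,\}$ and argue inductively: the monotonicity $\dist(\mathbf{a}_{k+1},S)\le\dist(\mathbf{a}_k,S)$ from the first step keeps the whole orbit inside the tube once $\mathbf{a}_0$ lies in it, which in turn licenses reapplying (i)--(iii) at the next step. A minor point to check along the way is the compatibility of the selection $\mathbf{a}^*_+=\mathbf{a}^*$ with the universal quantifier in (i); here the hypothesis $T(\mathbf{a}^*)\subseteq\Fix T$ is what keeps any alternative selection inside $\Fix T$, although the constant selection already suffices for the argument above.
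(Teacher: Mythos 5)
Your proof is correct, but note that the paper itself contains no proof of this proposition: it is imported as a simplified version of Theorem 2.2 of \cite{LukNguTam16}, so the paper's ``own proof'' is a citation, and your blind attempt supplies the argument that the citation hides. What you wrote is, in substance, that underlying argument, and it is complete in its essentials: choosing $\mathbf{a}^*\in P_S(\mathbf{a}_k)$ (nonempty since $S$ is closed) together with the admissible selection $\mathbf{a}^*_+=\mathbf{a}^*$ (legitimate because $S\subset\Fix T$, and compatible with the universal quantifier in (i)) collapses \eqref{averaged of T} to $\dist(\mathbf{a}_{k+1},S)^2\le\dist(\mathbf{a}_k,S)^2-\frac{1-\alpha}{\alpha}\norm{\mathbf{a}_{k+1}-\mathbf{a}_k}^2$; the identification $\psi^{-1}(0)=\Fix T$ together with \eqref{met_subreg} and condition (iii) gives $\gamma\,\dist(\mathbf{a}_k,S)\le\norm{\mathbf{a}_{k+1}-\mathbf{a}_k}$; combining the two yields the explicit contraction factor $c=1-(1-\alpha)\gamma^2/\alpha$, and your observation that $c\ge 0$ is forced automatically (consistency of the two bounds implies $\gamma^2\le\alpha/(1-\alpha)$ whenever $\dist(\mathbf{a}_k,S)>0$, while $\dist(\mathbf{a}_k,S)=0$ makes the step trivial) closes the only loose end in that computation. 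What your route buys is a short, self-contained, elementary proof with an explicit linear rate $\sqrt{c}$; what the citation buys the authors is the much greater generality of \cite{LukNguTam16} (pointwise \emph{almost} averaging with violation constants, set-valued and quasi-cyclic operators), none of which the simplified statement actually needs. The one genuine delicacy is precisely the one you flagged: since (i)--(iii) are assumed only on $W$, an arbitrary neighborhood of $S$, the induction requires the whole orbit to stay in $W$, and Fej\'er-type monotonicity of $\dist(\cdot,S)$ does not guarantee this for an arbitrarily shaped $W$. Your remedy --- replacing $W$ by a tube $\left\{\mathbf{a}\mid\dist(\mathbf{a},S)\le\delta\right\}$ contained in it, which exists whenever $S$ is compact, as it is in the paper's application (in Lemma~\ref{t:matrix I} and Theorem~\ref{T:MATRIX_GENERAL} the relevant $S$ is a product of circles and points) --- is the standard reading under which the statement is true; it should be stated as part of the proof rather than left as a closing remark, since without it the quantifier ``all Picard iterations starting in $W$'' is not literally justified.
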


Condition $(iii)$ in Proposition \ref{p:convergence} is, on one hand, a technical assumption and becomes redundant when $S=\Fix T$.
On the other hand, the set $S$ allows one to exclude from the analysis possible \emph{inhomogeneous} fixed points of $T$, at which the algorithm often exposes weird convergence behavior \cite[see Example 2.1 of]{LukNguTam16}.

The size of neighborhood $W$ appearing in Proposition \ref{p:convergence} indicates the robustness of the algorithm in terms of erroneous input (the distance from the starting point to a nearest solution).

We now apply the abstract result of Proposition \ref{p:convergence} to the following special, but important case.

\begin{thm}\label{T:MATRIX_GENERAL} 
Let $U\in \mathbb{C}^{n_a\times n_a}$ be unitary and $\mathbf{a}^*\in \mathbb{C}^{n_a}$ be such that $|U\mathbf{a}^*|^2=\mathbf{y}$.
Then every Picard iteration generated by Algorithm~\ref{ALG:COPR} $\mathbf{a}_{k+1}\in T(\mathbf{a}_k)$ starting sufficiently close to $\mathbf{a}^*$ converges linearly to a point $\tilde{\mathbf{a}} \in \Fix T$ satisfying $|U\tilde{\mathbf{a}}|^2=\mathbf{y}$.
\end{thm}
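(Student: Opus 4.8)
The plan is to verify the three hypotheses of Proposition~\ref{p:convergence} for the fixed point operator $T$ of \eqref{T:operator}, taking as candidate set the true solution set
\[
S := \left\{\mathbf a\in\Complex^{n_a}\mid \mathbf y=\abs{U\mathbf a}^2\right\},
\]
which is nonempty and contained in $\Fix T$ by Lemma~\ref{LEM:FIX_NONEMPTY}; linear convergence then follows at once from that proposition, and the limit lies in $\Fix T$, which we will identify with $S$, hence satisfies $\abs{U\tilde{\mathbf a}}^2=\mathbf y$. The decisive simplification is unitarity: since $U$ is unitary, $\mathbf v\mapsto U\mathbf v$ is an isometric bijection, so I would work with the conjugated operator $\widehat T=U\,T\,U^H$; because the norms and distances in \eqref{averaged of T} and \eqref{met_subreg} are invariant under $U$, every averaging and subregularity constant established for $\widehat T$ transfers verbatim to $T$. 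I would therefore reduce, without loss of generality, to $U=I$.

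First I would expose the block structure of $M$. Each block of $M(U,\mathbf x,-\mathbf a,\mathbf y)$ is diagonal, so a symmetric row/column permutation turns it into a direct sum of $2\times2$ blocks $M_i$, one per index; with $u_i=(U\mathbf x)_i$, $v_i=(U\mathbf a)_i$ and $w_i=u_i-v_i$ one obtains
\[
M_i=\pmat{y_i-\abs{u_i}^2+\abs{w_i}^2 & \overline{w_i}\\ w_i & 1},\qquad \det M_i=y_i-\abs{u_i}^2 .
\]
Using $\norm{M_i}_*^2=\norm{M_i}_F^2+2\abs{\det M_i}$ for $2\times2$ matrices, the nuclear norm becomes the separable sum $\sum_i\norm{M_i}_*$, and after the unitary change of variables the minimization in \eqref{T:operator} decouples into $n_a$ independent scalar problems in the $u_i$.

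The heart of the argument is to solve each scalar problem in closed form. Since the $i$-th objective depends on $u_i$ only through $\abs{u_i}$ and $\re{\overline{v_i}u_i}$, its minimizer is a real multiple $u_i=\mu v_i$; substituting this and analysing the resulting one-dimensional, piecewise-smooth function in $\mu\in\Real$, I expect to show it is strictly decreasing up to the nonsmooth kink $\mu^2\abs{v_i}^2=y_i$ and strictly increasing afterwards, so that the unique global minimizer is the radial projection $u_i^+=\sqrt{y_i}\,v_i/\abs{v_i}$ whenever $y_i>0$. Thus $\widehat T$ coincides with the componentwise magnitude projection $P_{\mathcal C}$ onto $\mathcal C=\{\mathbf u\mid\abs{u_i}=\sqrt{y_i}\}$, equivalently $T=P_S$ on a neighbourhood $W$ of $S$. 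Establishing this identification is, in my view, the main obstacle: it requires proving \emph{global} (not merely local) optimality at a nonsmooth point, controlling the set-valuedness of the argmin away from $S$, and treating the degenerate components with $y_i=0$, where $v_i^*=0$ and the scalar map is instead the linear contraction $v_i\mapsto v_i/2$ toward the singleton $\{0\}$ rather than a projection.

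With $T$ identified as the metric projection onto $S$ on $W$ (up to the contractive degenerate components), the three conditions become routine. Condition~(iii) holds with equality because the fixed point set of a projection is its target set, so $\Fix T=S$. For condition~(ii), the projection residual equals the distance, whence
\[
\dist\!\bigl(0,(T-\Id)(\mathbf a)\bigr)=\norm{P_S(\mathbf a)-\mathbf a}=\dist(\mathbf a,S)=\dist\!\bigl(\mathbf a,(T-\Id)^{-1}(0)\bigr),
\]
giving metric subregularity with $\gamma=1$ (the degenerate components are covered by $\gamma=\tfrac12$). For condition~(i), on $W$ one has $\mathbf a_+=T(\mathbf a)=P_S(\mathbf a)=\mathbf a^*$ and $\mathbf a^*_+=T(\mathbf a^*)=\mathbf a^*$, so the left-hand side of \eqref{averaged of T} vanishes and the inequality reduces to $\bigl(1-\tfrac{1-\alpha}{\alpha}\bigr)\dist(\mathbf a,S)^2\ge0$, which holds for every $\alpha\in[\tfrac12,1)$ (a direct computation yields the same constant for the degenerate components). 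Hence $T$ is \emph{pointwise averaging} at every point of $S$ with $\alpha=\tfrac12$ on $W$, all hypotheses of Proposition~\ref{p:convergence} are met, and every Picard iterate started sufficiently close to $\mathbf a^*$ converges at least linearly to some $\tilde{\mathbf a}\in\Fix T=S$, i.e. $\abs{U\tilde{\mathbf a}}^2=\mathbf y$.
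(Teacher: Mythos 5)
Your overall architecture is the same as the paper's: conjugate by the unitary $U$ to reduce to $U=I_{n_a}$, permute $M$ into $2\times 2$ blocks and use $\norm{M_i}_*^2=\norm{M_i}_F^2+2\abs{\det M_i}$, solve the scalar problems in closed form, treat $y_i=0$ as the contraction $a_i\mapsto a_i/2$, and verify the three hypotheses of Proposition~\ref{p:convergence} for the product operator. However, the step you yourself flag as ``the main obstacle'' --- the closed form of the scalar argmin --- is not only missing but is predicted incorrectly. It is false that for $y_i>0$ the one-dimensional function decreases up to the kink and increases afterwards, so that the global minimizer is always the radial projection: the paper's explicit computation \eqref{T_i} exhibits three regimes. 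The minimizer equals $\frac{\sqrt{y_i}}{\abs{a_i}}a_i$ only when $0<\abs{a_i}\le\sqrt{\lambda_i}$, where $\lambda_i$ is the unique positive root of the cubic $t^3+2(1-y_i)t^2+(y_i^2-6y_i+1)t-4y_i$ and $\tfrac{3}{2}\sqrt{y_i}<\sqrt{\lambda_i}<2\sqrt{y_i}$; for $\abs{a_i}\ge\sqrt{\lambda_i}$ the global minimizer is $\frac{y_i+\abs{a_i}^2+1}{2(\abs{a_i}^2+1)}a_i$ (roughly $a_i/2$ for large $\abs{a_i}$), whose modulus exceeds $\sqrt{y_i}$, i.e.\ it lies strictly beyond the kink. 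So the identification $T=P_S$ holds only on a neighbourhood of $S$ small enough to avoid this third regime, and locating that regime requires precisely the cubic-threshold computation you skipped; every subsequent verification in your plan leans on ``$T$ is exactly a projection,'' so the plan cannot be completed as stated.

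The second genuine error is the claim $\Fix T=S$. From \eqref{T_i}, $T_i(0)$ is the entire disk $\parennn{z\in\Complex \mid \abs{z}\le\sqrt{y_i}}$, which contains $0$, so $0\in\Fix T_i$; consequently $\Fix T$ contains \emph{inhomogeneous} fixed points with zero components and is strictly larger than $S$. Your disposal of condition (iii) (``the fixed point set of a projection is its target set'') fails because $\Fix T$ is a global object, not determined by the restriction of $T$ to $W$. Condition (iii) is exactly the paper's device for excluding these spurious fixed points, and it has real content: one must check $\dist(z,S_i)\le\dist(z,\{0\})$ on the chosen neighbourhood, which holds e.g.\ on $\parennn{z \mid \abs{z}\ge\sqrt{y_i}/2}$ but not globally. (Two of your checks do survive: condition (ii) is unaffected since replacing $S$ by the larger set $\psi^{-1}(0)=\Fix T$ only decreases the left-hand side of \eqref{met_subreg}, and your averaging computation with constants $1/2$ and $1/4$ is sound on the region where the projection and contraction formulas are valid.) In short, the skeleton matches the paper, but the two claims you treat as routine --- the global form of the scalar minimizer and $\Fix T=S$ --- are exactly where the paper has to do work, and both are false as you state them.
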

\begin{proof}
See Appendix~\ref{APP:MATRIX GENERAL}.
\end{proof}

\section{Numerical experiments}\label{sec:numericalexperiments}

Three important numerical aspects of the CORP algorithm, including flexibility, complexity, and robustness, are tested on relevant problems.
First, we demonstrate the flexibility of the convex relaxation by comparing the COPR algorithm with an added $\ell_1$-regularization to the PhaseLift method \cite{candes2013phaselift} and to the CPRL method in \cite{Ohlsson} on an under-determined sparse estimation problem.
Second, we compare the practically observed computational complexity of COPR and a 
naive implementation of PhaseLift \cite{candes2013phaselift}.
Finally, we investigate the robustness of CORP relative to noise
in a Monte-Carlo simulation for 25 and 100 basis functions. We compare four algorithms: COPR, PhaseLift \cite{candes2013phaselift}, a basic alternating projections method (Section 4.3 in \cite{candes2013phaselift}) and an averaged projections method based on \cite{Luke_Toolbox}.
We note that the latter method fundamentally employs the Fourier transform at every iteration and hence is, in generally, not applicable for phase retrieval in the modal form.

\subsection{Application of COPR to compressive sensing problems}

The first problem is to estimate 16 coefficients from 8 measurements, where the optimal vector is known to be sparse.

We generate a sparse coefficient vector $\veca$ with two randomly generated non-zero complex elements. We generate two images ($n_d = 2,~m = 128$) by applying two different amounts of defocus with Zernike coefficients $-\frac{\pi}{8}$ and $\frac{\pi}{8}$, respectively. From each image we use the center $2 \times 2$ pixels, resulting in a total of $n_y = 8$ measurements.

The applied algorithms are the COPR algorithm, the COPR algorithm with an additional $\ell_1$-regularization, the PhaseLift algorithm \cite{candes2013phaselift} and the Compressive sensing Phase Retrieval (CPRL) algorithm of \cite{Ohlsson}. The results are displayed in Figure~\ref{fig:sparse}.
\begin{figure}[ht]
	\centering
    \includegraphics[width=0.7\columnwidth]{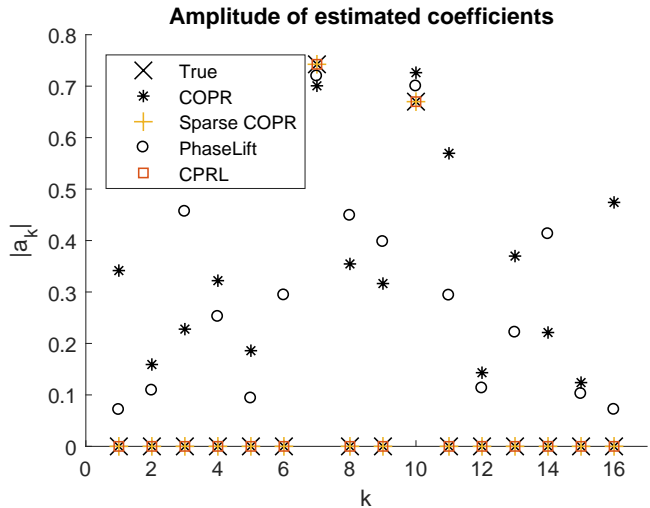}
    \caption{The absolute values of 16 estimated coefficients according to 4 different algorithms.}
    \label{fig:sparse}
\end{figure}
As can be seen from the figure, COPR and PhaseLift fail to retrieve the correct solution. The CPRL method and the regularized COPR algorithm compute the correct solution.


\subsection{Computational complexity}

The second problem demonstrates the trends of the required computation time when the number of estimated coefficients increases.
The underlying estimation problem consists of 7 images with different amounts of defocus applied as phase diversity, where each image is of size 128 by 128 pixels. A subset of 20 by 20 pixels of each image is used in the estimation.
We compare the COPR algorithm to the PhaseLift algorithm, which is implemented according to optimization problem (2.5) in \cite{candes2013phaselift}.
\begin{figure}[ht]
	\centering
    \includegraphics[width=0.7\columnwidth]{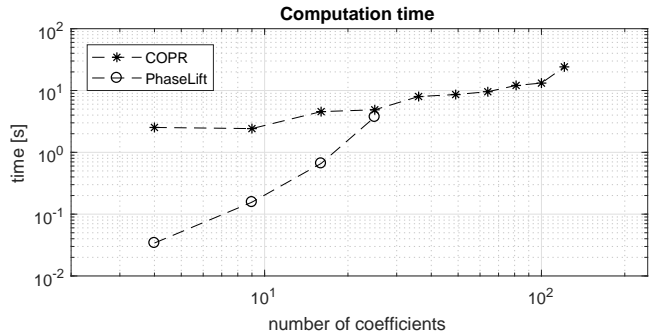}
    \caption{A computation time comparison between PhaseLift and COPR for different numbers of coefficients.}
    \label{fig:trend}
\end{figure}
For PhaseLift, the reported time is the time it takes the MOSEK solver \cite{mosek} to solve the optimization problem. This does not include the time taken by YALMIP \cite{Lofberg2004} to convert the problem as given to the solver-specific form.
For COPR, the number of iterations is set beforehand according to convergence to the correct solution, and the total time is recorded.
By convergence we mean that the estimated vector $\hat{\veca}$ satisfies the tolerance criterion:
\begin{equation}
\min_{c \in \Complex,~\abs{c} = 1} \norm{c\hat{\veca} - {\veca}^*}_2^2 \leq 10^{-5},
\end{equation}
where $\veca^*$ is the exact solution.

The minimization over the parameter $c$ ensures that the (unobservable) piston mode in the phase is canceled.\footnote{Let $\pmat{\hat{\veca} & {\veca}^*} = QR$ be the QR decomposition. Then $\angle c^* = \angle \frac{R_{12}}{R_{11}}$. }
The computational complexity of PhaseLift is, as implemented, approximately $\bigO{n^4}$. The MOSEK solver ran into numerical issues for more than 25 estimated parameters.
The COPR algorithm's computational complexity is approximately $\bigO{n}$. The better complexity is offset by a longer computation time for very small problems.

\subsection{Robustness to noise}
When estimating of an unknown phase aberration, it is more logical to evaluate the performance of the algorithm on its ability to estimate the phase, and not the coefficients of basis functions.

We assume the phase is randomly generated with a deformable mirror. Let $H \in \Real^{m^2 \times n_u}$ be the mirror's influence matrix and $\vecu \in \Real^{n_u}$ be the input to the mirror's actuators, such that
\begin{equation}
	\phi_{DM} = H \vecu. \label{eq:DM}
\end{equation}

The input values $u_i$ are drawn from the uniform distribution between 0 and 1.
The mirror has $n_u = 44$ actuators and the images have sides $m = 128$. The aperture radius is $0.4$.

Five different defocus diversities are applied with Zernike coefficients uniformly spaced between $-\frac{\pi}{2}$ and $\frac{\pi}{2}$.
Gaussian noise is added to the obtained images such that
\begin{equation}
	\vecy = \max(0, \abs{\fourier{P_d(\rho,\theta)}}^2+ \varepsilon),~ \varepsilon \in N(0,\sigma I).
\end{equation}
and $\sigma$ is the noise variance.
No denoising methods were applied.
The signal-to-noise ratio (SNR) is computed according to
\begin{equation}
	10 \log_{10} \frac{\norm{\vecy - \abs{\fourier{P_d(\rho,\theta)}}^2 }_2^2}{\norm{\abs{\fourier{P_d(\rho,\theta)}}^2}_2^2}.
\end{equation}

The phase is estimated from $\vecy$ using four different algorithms.
The first is the COPR algorithm.
The second is the averaged projections (AvP) algorithm \cite{Luke_Toolbox}.
The third is the alternating projections (AlP) method (\cite{candes2013phaselift}, section 4.3), and the fourth algorithm is the PhaseLift method \cite{candes2013phaselift}.

The COPR and the AlP methods are applied for two cases corresponding to using 25 and 100 basis functions.
The PhaseLift method is applied for only the case with 25 basis functions due to numerical problems in the solver for larger problems.

The AvP method is not based on the use of basis functions but on the Fourier transform.
Due to the sensitivity to noise of this method, 100 basis functions were fit to the estimated object plane field.
The phase generated by these weighted basis functions was used to report performance.
The use of basis functions improved the phase estimate.

We make use of the Strehl ratio as a measure of optical quality.
The Strehl ratio $S$ is the ratio of the maximum intensity of the aberrated PSF and that of the unaberrated one and can be approximated with the expression of Mahajan:
\begin{equation*}
	S \approx \expp{-\delta^2},
\end{equation*}
where $\delta = \norm{\phi_{DM} - \hat{\phi}}_2$ and the mean residual phase has been removed \cite{roddier1999adaptive}.

For every noise level, 100 different phases were generated with the deformable mirror model \eqref{eq:DM}.
The results are presented in Figure~\ref{fig:noise}.
\begin{figure}[ht]
	\centering
    \includegraphics[width=1\columnwidth]{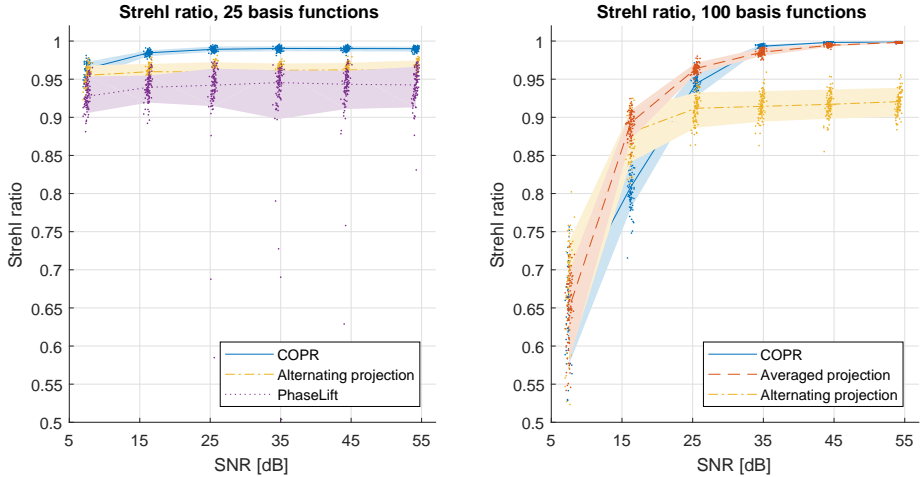}
    \caption{The Strehl ratio of the estimated phase aberration as a function of SNR. The shaded areas indicate the 10\% and 90\% quantiles.}
    \label{fig:noise}
\end{figure}
The resulting Strehl-ratio's are plotted with a trend line and shaded quantile lines at 10\% and 90\%.

In the case of PhaseLift, the tuning parameter that trades off measurement fit and the rank of the `lifted' matrix is tuned once and applied to all problems.
This has the effect that the reported performance is not as high as it could be with optimal tuning for individual problems.
This points to another advantage of COPR: the absence of tuning parameters aside from the choice of basis functions.

The figure shows that COPR appears to be robust to noise.
Also, the figure on the right shows that when the number of basis functions is high, the estimated phase is very close to the exact phase in low noise settings, something that cannot be done with 25 basis functions.
However, when the noise level is high, the choice for a smaller number of basis functions shows better performance.
We attribute this to overfitting in high noise level circumstances.

\section{Concluding Remarks}\label{sec:remarks}
The convex relaxations in solving the phase retrieval problem as proposed in \eqref{EQ:NN} have the advantage over current convex relaxation methods, such as PhaseLift, that our strategy is affine in the coefficients that are to be estimated.
This allows for easy extension of the proposed method to phase retrieval problems that incorporate prior knowledge on the coefficients by regularization of the objective function.
One such successful extension is the regularization with the $\ell_1$-norm to find sparse solutions, as demonstrated in Figure~\ref{fig:sparse}.

In Section~\ref{sec:admm} an ADMM algorithm was proposed for efficient computation of the solution to \eqref{EQ:NN}. 
The result is that for the COPR algorithm a better computational complexity is observed compared to PhaseLift, see Figure~\ref{fig:trend}. 
COPR is also able to solve phase estimation problems with larger numbers of parameters.

The required computations are favourable both in computation time and accuracy (they have simple analytic solutions) and in worst-case scaling behaviour $\bigO{ n_y n_a}$ for every ADMM iteration, where $n_y$ is the number of pixels and $n_a$ is the number of basis functions.

We discussed convergence properties of the COPR algorithm in Section~\ref{sec:convergence} and showed that for selected problems this convergence is linear or faster.

Finally, COPR has been shown to be robust against measurement noise, and outperform the two projection-based methods whose naive forms are often sensitive to noise as expected.

We are aware that in practice the performance of projection methods can be substantially better than what we have observed in this study provided that appropriate denoising techniques are also applied.
Keeping aside from the matter of using denoising techniques, we have chosen to compare the algorithms in their very definition forms.

\section{Funding Information}

The research leading to these results has received funding from the European Research Council under the European Union's Seventh Framework Programme (FP7/2007-2013) / ERC grant agreement No. 339681.

\bibliographystyle{ieeetr}
\bibliography{bibliography,TUDelft_reference}

\appendix
\section{Proof of Lemma~\ref{LEM:FIX_NONEMPTY}}\label{APP:FIX_NONEMPTY}

\begin{proof}
Let $\mathbf{a}$ satisfy $\mathbf{y}=|U\mathbf{a}|^2$. It suffices to check that $\mathbf{a} \in T(\mathbf{a})$.
We first observe that
\[
\ra\paren{M(U,\mathbf{a},-\mathbf{a},\mathbf{y})} = \ra {\pmat{ 0 & 0 \\ 0 & I_{n_y} }}
 = n_y.
\]
This means that $\mathbf{a}$ is a global minimizer of $\ra M(U,\mathbf{x},-\mathbf{a},\mathbf{y})$ as a function of $\mathbf{x}\in \mathbb{C}^{n_a}$.
Since the nuclear norm
$\norm{M(U,\mathbf{x},-\mathbf{a},\mathbf{y})}_*$ is the \emph{convex envelop} of the $\ra M(U,\mathbf{x},-\mathbf{a},\mathbf{y})$, they have the same global minimizers.
Hence, $\mathbf{a}$ is also a global minimizer of $\norm{M(U,\mathbf{x},-\mathbf{a},\mathbf{y})}_*$ as a function of $\mathbf{x}$, that is
\[
  \mathbf{a}\in \arg\min_{\mathbf{x}\in \mathbb{C}^{n_a}}\; \norm{M(U,\mathbf{x},-\mathbf{a},\mathbf{y})}_*.
\]
In other words, $\mathbf{a}\in T(\mathbf{a})$ and the proof is complete.
\hfill
\end{proof}

\section{Proof of Theorem~\ref{T:MATRIX_GENERAL}}\label{APP:MATRIX GENERAL}
Lemma~\ref{t:matrix I} will serve as the basic step for proving Theorem \ref{T:MATRIX_GENERAL}.

\begin{lemma}\label{t:matrix I} Let $U=I_{n_a}$ and $\mathbf{a}^*\in \mathbb{C}^{n_a}$ be such that $|U\mathbf{a}^*|^2=\mathbf{y}$.
Then every Picard iteration $\mathbf{a}_{k+1}\in T(\mathbf{a}_k)$ starting sufficiently close to $\mathbf{a}^*$ converges linearly to a point $\tilde{\mathbf{a}} \in \Fix T$ satisfying $|U\tilde{\mathbf{a}}|^2=\mathbf{y}$.
\end{lemma}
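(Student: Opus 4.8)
The plan is to exploit the diagonal structure that $U=I$ forces on the lifted matrix. First I would note that with $U=I$ every factor $\dia{U\mathbf{x}}$, $\dia{\mathbf{x}^H U^H}$, $\dia{-\mathbf{a}^H U^H}$ and $\dia{-U\mathbf{a}}$ appearing in $M(U,\mathbf{x},-\mathbf{a},\mathbf{y})$ is diagonal (here $n_y=n_a$). Consequently, after the symmetric row/column permutation already used in Subsection~\ref{sec:Xupdate}, $M(I,\mathbf{x},-\mathbf{a},\mathbf{y})$ is block diagonal with one $2\times 2$ block per coordinate. Writing $t_i := x_i-a_i$, a short computation gives the Hermitian block
\[
M_i=\pmat{y_i-\abs{a_i}^2-2\re{\bar a_i t_i} & \bar t_i\\ t_i & 1}.
\]
Since the singular values of a block-diagonal matrix are the union of those of its blocks, $\norm{M(I,\mathbf{x},-\mathbf{a},\mathbf{y})}_*=\sum_i\norm{M_i}_*$, so the minimization defining $T$ in \eqref{T:operator} decouples into $n_a$ independent scalar problems and $T$ acts coordinatewise, $T(\mathbf{a})_i\in\arg\min_{x_i\in\Complex}\norm{M_i}_*$.

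Next I would solve each scalar problem in closed form. As $M_i$ is Hermitian, its nuclear norm is the sum of the absolute values of its eigenvalues; with $d_i:=y_i-\abs{a_i}^2-2\re{\bar a_i t_i}$ the trace is $d_i+1$ and the determinant is $d_i-\abs{t_i}^2$, so
\[
\norm{M_i}_*=\begin{cases} d_i+1, & d_i\ge \abs{t_i}^2,\\[2pt] \sqrt{(d_i-1)^2+4\abs{t_i}^2}, & d_i<\abs{t_i}^2.\end{cases}
\]
The condition $d_i\ge\abs{t_i}^2$ is exactly $\abs{x_i}^2\le y_i$. In the first regime the objective is affine and decreasing in $\re{\bar a_i x_i}$, hence minimized on the circle $\abs{x_i}=\sqrt{y_i}$ in the direction of $a_i$; in the second regime, minimizing over the phase first shows the minimizer is a real multiple of $a_i$, reducing each block to a one-dimensional minimization over $c=\pm\abs{x_i}$. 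I would then show that near the fixed point $\abs{a_i}=\sqrt{y_i}$ the interior critical point of the second regime violates its own feasibility constraint $\abs{x_i}^2>y_i$, so the global minimizer is $x_i^\star=\sqrt{y_i}\,a_i/\abs{a_i}$. In other words, on a neighborhood of the torus $S:=\parennn{\mathbf{a}\mid\abs{\mathbf{a}}^2=\mathbf{y}}$ the operator $T$ coincides with the metric projection $P_S$, which is single-valued and smooth there because each factor $\{\abs{a_i}=\sqrt{y_i}\}$ is a circle away from the origin.

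With this identification the conclusion is immediate. By Lemma~\ref{LEM:FIX_NONEMPTY}, $S\subseteq\Fix T$; hence for $\mathbf{a}_0$ in the neighborhood $W$ on which $T=P_S$, one iteration gives $\mathbf{a}_1=P_S(\mathbf{a}_0)\in S\subseteq\Fix T$ and the sequence is stationary thereafter, so $\mathbf{a}_k\to\tilde{\mathbf{a}}:=P_S(\mathbf{a}_0)$ with $\abs{U\tilde{\mathbf{a}}}^2=\abs{\tilde{\mathbf{a}}}^2=\mathbf{y}$; finite termination is in particular (at least) linear. Equivalently, one may verify the hypotheses of Proposition~\ref{p:convergence} directly: with $S$ the torus above, $P_S-\Id$ is \emph{metrically subregular} and $P_S$ is \emph{pointwise averaging} on $W$ since $S$ is a $C^2$ manifold, while condition $(iii)$ holds because a small $W$ excludes the degenerate inhomogeneous fixed points (e.g.\ $a_i=0$, where the scalar block is minimized by an entire disk). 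The main obstacle is the scalar computation itself: carrying out the two-regime case analysis on the sign of $\det M_i$, proving the minimizer is a real multiple of $a_i$, and verifying that the second-regime critical point stays infeasible throughout a full neighborhood of $\abs{a_i}=\sqrt{y_i}$ — this is precisely what pins $T$ down to $P_S$ and must be handled with care near the degenerate coordinates $a_i=0$ and any $y_i=0$.
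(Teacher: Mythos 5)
Your computation matches the paper's own proof in its core: with $U=I_{n_a}$ the problem decouples into $2\times2$ Hermitian blocks, your closed form for $\norm{M_i}_*$ agrees with the paper's $f_i(x_i)$, which equals $(r+1)^2$ when $r\ge s^2$ and $(r-1)^2+4s^2$ otherwise (with $r=d_i$, $s=\abs{t_i}$), and your two-regime analysis reproduces the paper's explicit operator \eqref{T_i}: indeed, squaring your feasibility condition $\abs{a_i}\paren{y_i+\abs{a_i}^2+1}=2\sqrt{y_i}\paren{\abs{a_i}^2+1}$ for the second-regime critical point gives exactly the paper's cubic $g_i(t)=t^3+2(1-y_i)t^2+(y_i^2-6y_i+1)t-4y_i$ with $t=\abs{a_i}^2$. (One small repair: for fixed $\abs{x_i}$ the phase minimizer need not align with $a_i$ when $\abs{x_i}\abs{a_i}>(y_i+\abs{a_i}^2+1)/2$; argue instead by splitting $x_i$ into components parallel and orthogonal to $a_i$ and noting that the objective is increasing in the orthogonal part.) For coordinates with $y_i>0$, your identification of $T$ with $P_S$ near the torus, and hence one-step termination, is correct; the paper records exactly this sharpening in the remark following the lemma.

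The genuine gap is the case $y_i=0$ for some $i$, which the lemma's hypotheses allow (any coordinate with $a_i^*=0$ forces $y_i=0$) and which the paper treats as a separate case. There your central claim --- that $T$ coincides with $P_S$ on a neighborhood of $S$ --- is false: when $y_i=0$ one has $\lambda_i=0$, the second-regime critical point $c^\star=\abs{a_i}\paren{\abs{a_i}^2+1}/\paren{2(\abs{a_i}^2+1)}=\abs{a_i}/2$ is feasible for every $a_i\ne 0$, and so $T_i(a_i)=\tfrac12 a_i$ on all of $\Complex$, whereas $P_{S_i}(a_i)=0$. No shrinking of the neighborhood repairs this, because $S_i=\{0\}$, so every neighborhood of $S$ contains points with $a_i\ne 0$ in that coordinate; the iteration there contracts geometrically with rate $\tfrac12$ and never terminates finitely, so the ``stationary after one step'' conclusion fails. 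Your fallback of verifying Proposition~\ref{p:convergence} for $P_S$ does not close the hole either, since the hypotheses must be verified for $T$, not for $P_S$. The fix is what the paper does in its Case 2: show directly that $T_i(a_i)=\tfrac12 a_i$ is pointwise averaging (constant $1/4$) and that $T_i-\Id$ is metrically subregular (constant $1/2$) on all of $\Complex$ with $S_i=\{0\}$, then apply Proposition~\ref{p:convergence} to the product operator. The conclusion in that case is genuinely linear, not finite, convergence --- which is precisely why finite convergence appears in the paper only as a remark under the additional assumption $y_i>0$ for all $i$.
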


\begin{proof}
Since $U=I_{n_a}$, the nuclear norm of $M(I_{n_a},\mathbf{x},-\mathbf{a},\mathbf{y})$ can be calculated from the nuclear norms of $n_a$ matrices $M(1,x_i,-a_i,y_i) \in \mathbb{C}^{2\times 2}$ $(1\le i\le n_a)$.
Let us do the calculation for an arbitrary $\mathbf{a}\in \mathbb{C}^{n_a}$.
We first calculate the nuclear norm of each $2\times 2$ matrix
\begin{equation*}
  M(1,x_i,-a_i,y_i) = \left(
                        \begin{array}{cc}
                          y_i-2\re{x_i\overline{a_i}}+ |a_i|^2 & x_i-a_i\\
                          \overline{x_i}-\overline{a_i} & 1\\
                        \end{array}
                      \right).
\end{equation*}
Indeed, we have by direct calculation that
\begin{equation}\label{f_i}
	\begin{aligned}
   		f_i(x_i) &:= \norm{M(1,x_i,-a_i,y_i)}_*^2  \\
        &= \norm{\pmat{ r & s \\ s & 1 } }_*^2  \\
        &= r^2 + 2s^2 + 1 + 2|r-s^2|,
   \end{aligned}
\end{equation}
where
\begin{equation*}
  r := y_i-2\re{x_i\overline{a_i}}+ |a_i|^2,\quad s := |x_i-a_i|.
\end{equation*}

Let us denote
\begin{equation}\label{T_i_def}
  T_i(a_i) := \arg\min_{\substack{x_i\in \mathbb{C}}}\; f_i(x_i).
\end{equation}

Solving analytically the minimization problem on the right-hand side of \eqref{T_i_def}, we obtain the explicit form of $T_i$ as follows
\begin{equation}\label{T_i}
  T_i(a_i) =
  \left\{
    \begin{array}{ll}
      \left\{z \in \mathbb{C} \mid |z| \le \sqrt{y_i}\right\}, & \hbox{if }\quad a_i=0,\\
      \left\{\frac{\sqrt{y_i}}{|a_i|}a_i\right\}, & \hbox{if }\quad 0< |a_i| \le \sqrt{\lambda_i},\\
      \left\{\frac{y_i+|a_i|^2+1}{2(|a_i|^2+1)}a_i\right\}, & \hbox{if }\quad |a_i|\ge \sqrt{\lambda_i},
    \end{array}
  \right.
\end{equation}
where $\lambda_i$ is the unique real positive root of the real polynomial $g_i(t):=t^3+2(1-y_i)\,t^2+(y_i^2-6y_i+1)t-4y_i$.

We need to take care of the two possible cases of $y_i$.

\textbf{Case 1.} $y_i\in (0,1]$. Then we have $\frac{3}{2}\sqrt{y_i} < \sqrt{\lambda_i} <2\sqrt{y_i}$ since $g_i\paren{\frac{9}{4}y_i} <0$ and $g_i\paren{4y_i} >0$. The following properties of $T_i$ can be verified.
      \begin{itemize}
  \item $\Fix T_i = \left\{z \in \mathbb{C} \mid |z| = \sqrt{y_i}\right\} \cup \{0\}$, where $0$ is an inhomogeneous fixed point of $T_i$, that is, $T_i(0) \nsubseteq \Fix T_i$.
  \item The set of homogeneous fixed points of $T_i$ is $S_i:=\left\{z \in \mathbb{C} \mid |z| = \sqrt{y_i}\right\}$.
  \item $T_i$ is pointwise
  averaging at every point of $S_i$ on $W_i := \{z\in \mathbb{C}\mid |z| \ge \sqrt{y_i}/2\}$ with
  constant $3/4$.
  \item The set-valued mapping $\psi_i:= T_i-\Id$ is metrically subregular on $W_i$ for $0$ with constant $1/2$.
  \item The technical assumption $\dist(z,S_i) \le \dist(z,\Fix T_i)$ holds for all $z\in W_i$.
\end{itemize}

\textbf{Case 2.} $y_i=0$. Then $\lambda_i=0$. Note also that $a^*_i=0$ and the formula \eqref{T_i} becomes
$T_i(a_i) = \frac{1}{2}a_i$. The following properties of $T_i$ can be verified.
      \begin{itemize}
  \item $\Fix T_i = \{0\}$, where $0$ is a homogeneous fixed point of $T_i$.
  \item $T_i$ is pointwise
  averaging at every point of $S_i$ on $\mathbb{C}$ with
  constant $1/4$.
  \item The set-valued mapping $\psi_i:= T_i-\Id$ is metrically subregular on $\mathbb{C}$ for $0$ with constant $1/2$.
  \item The technical assumption $\dist(z,S_i) \le \dist(z,\Fix T_i)$ holds for all $z\in \mathbb{C}$.
\end{itemize}
 In this case, we denote $S_i:=\{0\}$ and $W_i:=\mathbb{C}$.
 \bigskip

The operator $T$ can be calculated explicitly
\begin{equation}\label{eg:operator_matrix}
T(\mathbf{a}) = \arg\min_{\substack{\mathbf{x}\in \mathbb{C}^{n_a}}}\; \sum_{i=1}^{n_a} \sqrt{f_i(x_i)},\quad \forall \mathbf{a}\in \mathbb{C}^{n_a},
\end{equation}
where the constituent functions $f_i(x_i)$ are given by \eqref{f_i}.

Minimizing $f_i$ ($i=1,2\ldots,n_a$) separately yields the explicit form of $T$ as a Cartesian product
\begin{equation}\label{T(a) product}
  T(\mathbf{a}) = T_1(a_1) \times T_2(a_2) \cdots \times T_{n_a}(a_{n_a}),
\end{equation}
where the component operators $T_i$ are given by \eqref{T_i}.

Thanks to the separability structure of $T$ as a Cartesian product at \eqref{T(a) product}, the following properties of $T$ in relation to Proposition \ref{p:convergence} can be deduced from the corresponding ones of the component operators $T_i$.
\begin{itemize}
  \item $\Fix T = \prod_{i=1}^{n_a} \Fix T_i$ and the set of homogeneous fixed points of $T$ is $S:=\prod_{i=1}^{n_a} S_i$. It is clear that $|U{\mathbf{a}}|^2=\mathbf{y}$ for $U=I_{n_a}$ and all ${\mathbf{a}}\in S$.
  \item $T$ is pointwise
  averaging at every point of $S$ on $W:=\prod_{i=1}^{n_a} W_i$ with
  constant $\alpha=3/4$.
  \item The set-valued mapping $\psi:= T-\Id$ is \emph{metrically subregular} on $W$ for $0$ with constant $\kappa=1/2$.
  \item The technical assumption 
  $(iii)$ of Proposition \ref{p:convergence} is satisfied on $W$. That is,
      \begin{equation}\label{tech_ass_T}
        \dist(\mathbf{w},S) \le \dist(\mathbf{w},\Fix T),\quad \forall \mathbf{w}\in W.
      \end{equation}

\end{itemize}
Now we can apply Proposition \ref{p:convergence} to conclude that every Picard iteration $\mathbf{a}_{k+1}\in T(\mathbf{a}_k)$ starting in $W$ converges linearly
to a point in $S$ as claimed.
\hfill
\end{proof}

\begin{remark}
Under the assumption that $y_i>0$ for all $1\le i\le n_a$, then the linear convergence result established in Lemma~\ref{t:matrix I} can be sharpened to finite convergence.
\end{remark}

In order to distinguish the fixed point operator \eqref{T:operator} corresponding to a general unitary matrix $U$ from the one analyzed in Lemma~\ref{t:matrix I} corresponding to the identity matrix $I_{n_a}$, in the following proof, we will use the notation $\widehat{T}$ for one specified in Theorem~\ref{T:MATRIX_GENERAL}.

\begin{proof}
Let $T$ be the fixed point operator \eqref{T:operator} which corresponds to the identity matrix and has been analyzed in Lemma~\ref{t:matrix I}.
We start the proof by proving that
\begin{equation}\label{swap}
  \widehat{T}(\mathbf{a}) = U^{-1}T(U\mathbf{a}),\quad \forall \mathbf{a}\in\mathbb{C}^{n_a}.
\end{equation}
Indeed, let us take an arbitrary $\mathbf{a}\in \mathbb{C}^{n_a}$ and denote $\mathbf{a}' = U\mathbf{a}$. Then we have
\begin{equation}
  \begin{aligned}
      \widehat{T}(\mathbf{a}) &= \arg\min_{\mathbf{x}\in\mathbb{C}^{n_a}}
      \; \norm{M(U,\mathbf{x},-\mathbf{a},\mathbf{y})}_*
      \\
      &= \arg\min_{\mathbf{x}\in\mathbb{C}^{n_a}}
      \; \norm{M(I_{n_a},U\mathbf{x},-\mathbf{a}',\mathbf{y})}_*
      \\
      &= U^{-1}\paren{\arg\min_{\mathbf{x}\in\mathbb{C}^{n_a}}\; \norm{M(I_{n_a},\mathbf{x},-\mathbf{a}',\mathbf{y})}_*}
      \\
      &= U^{-1}\paren{T(\mathbf{a}')} = U^{-1}\paren{T(U\mathbf{a})}.
  \end{aligned}
\end{equation}
We have proved \eqref{swap}. As a consequence,
\begin{equation}
  \begin{aligned}\label{Fix T_hat}
      \Fix \widehat{T} &= \{\mathbf{a}\in \mathbb{C}^{n_a} \mid \mathbf{a}\in \widehat{T}(\mathbf{a})\} \\
      & = \{\mathbf{a}\in \mathbb{C}^{n_a} \mid \mathbf{a}\in U^{-1}T(U\mathbf{a})\} \\      
      &= \{\mathbf{a}\in \mathbb{C}^{n_a} \mid U\mathbf{a}\in T(U\mathbf{a})\} \\
      &= \{\mathbf{a}\in \mathbb{C}^{n_a} \mid U\mathbf{a}\in \Fix T\} = U^{-1}\paren{\Fix T}.
  \end{aligned}
\end{equation}
For the sets $S$ and $W$ determined in the proof of Lemma~\ref{t:matrix I}, we denote $\widehat{S}:=U^{-1}(S)$ and $\widehat{W}:=U^{-1}(W)$. Since $U$ is a unitary matrix, the set of homogeneous fixed points of $\widehat{T}$ is $\widehat{S}:=U^{-1}(S)$.
It also holds by the definition of projection and \eqref{Fix T_hat} that, for all $\mathbf{w}\in W$,
\begin{equation}
	P_{U^{-1}(S)}\paren{U^{-1}\mathbf{w}} = U^{-1}\paren{P_{S}(\mathbf{w})}, \label{technical}
\end{equation}
\begin{equation}
	\dist\paren{U^{-1}\mathbf{w},U^{-1}(S)} = \dist\paren{U^{-1}\mathbf{w},U^{-1}(\Fix T)}.\label{vincinity}
\end{equation}
We now can verify the three assumptions on $\widehat{T}$ imposed in Proposition \ref{p:convergence}.

\begin{itemize}
\item $\widehat{T}$ is pointwise
averaging at every point of $\widehat{S}$ on $\widehat{W}$ with
constant $\alpha=3/4$.

    Indeed, take an arbitrary $\mathbf{a}\in \widehat{W}$, $\mathbf{a}_+\in \widehat{T}(\mathbf{a})$, $\hat{\mathbf{a}}\in P_{\widehat{S}}(\mathbf{a})$ and $\hat{\mathbf{a}}_+\in \widehat{T}(\hat{\mathbf{a}})$.
By definition of $\widehat{W}$, there is $\mathbf{w}\in W$ such that $\mathbf{a}=U^{-1}\mathbf{w}$ and $\mathbf{a}_+ = U^{-1}T(\mathbf{w})$, and thanks to \eqref{vincinity}, $\hat{\mathbf{a}} = U^{-1}\paren{P_S(\mathbf{w})}$ and $\hat{\mathbf{a}}_+ = U^{-1}\paren{P_S(\mathbf{w})} = \hat{\mathbf{a}}$.
Then by the pointwise
averagedness of $T$ established in the proof of Lemma~\ref{t:matrix I}, we have

\begin{equation}
  \begin{aligned}
  	\norm{\mathbf{a}_+ - \hat{\mathbf{a}}_+}^2 &= \norm{U^{-1}T(\mathbf{w}) - U^{-1}\paren{P_S(\mathbf{w})}}^2 \\
  		&= \norm{T(\mathbf{w}) - P_S(\mathbf{w})}^2\\
   		&\le \norm{\mathbf{w}-P_S(\mathbf{w})}^2 - \frac{1}{3}\norm{T(\mathbf{w}) - \mathbf{w}}^2\\
   		&= \norm{U^{-1}\mathbf{w}-U^{-1}\paren{P_S(\mathbf{w})}}^2 \\
        & \qquad - \frac{1}{3}\norm{U^{-1}(T(\mathbf{w})) - U^{-1}\mathbf{w}}^2\\
   &= \norm{\mathbf{a}-\hat{\mathbf{a}}}^2 - \frac{1}{3}\norm{\mathbf{a}_+ - \mathbf{a}}^2
  \end{aligned}
\end{equation}
as claimed.

\item The set-valued mapping $\widehat{\psi}:= \widehat{T}-\Id$ is metrically subregular on $\widehat{W}$ for $0$ with constant $\gamma = 1/2$.

Indeed, take an arbitrary $\mathbf{a}\in \widehat{W}$. By definition of $\widehat{W}$, there is $\mathbf{w}\in W$ such that $\mathbf{a}=U^{-1}\mathbf{w}$.
Then by \eqref{Fix T_hat} and the metric subregularity of $\psi$ established in the proof of Lemma~\ref{t:matrix I}, we have
\begin{equation*}
	\begin{aligned}
		\dist\paren{\mathbf{a},\widehat{\psi}^{-1}(0)} &= \dist\paren{\mathbf{a},\Fix \widehat{T}} \\
		&= \dist\paren{U^{-1}\mathbf{w},U^{-1}\paren{\Fix T}} \\
		&= \dist\paren{\mathbf{w},\Fix T} \\
 		&\le \frac{1}{2}\dist\paren{\mathbf{w},T(\mathbf{w})} \\
        &= \frac{1}{2}\dist\paren{U^{-1}\mathbf{w},U^{-1}(T(U\mathbf{a}))} \\
 		&= \frac{1}{2}\dist\paren{\mathbf{a},\widehat{T}(\mathbf{a})} = \frac{1}{2}\dist\paren{0,\widehat{\psi}(\mathbf{a})}    
    \end{aligned}
\end{equation*}
as claimed.

\item The technical assumption 
$(iii)$ of Proposition \ref{p:convergence} is satisfied on $\widehat{W}$.

     Indeed, take an arbitrary $\mathbf{a}\in \widehat{W}$. By definition of $\widehat{W}$, there is $\mathbf{w}\in W$ such that $\mathbf{a}=U^{-1}\mathbf{w}$. Then by \eqref{tech_ass_T}, \eqref{Fix T_hat} and \eqref{technical}, we have
\begin{equation*}
	\begin{aligned}
		\dist\paren{\mathbf{a},\widehat{S}} &= \dist\paren{U^{-1}\mathbf{w},U^{-1}(S)} = \dist\paren{\mathbf{w},S} \\
    		&\le \dist\paren{\mathbf{w},\Fix T} = \dist\paren{U^{-1}\mathbf{w},U^{-1}(\Fix T)} \\
            &= \dist\paren{\mathbf{a},\Fix \widehat{T}}
    \end{aligned}
\end{equation*}
as claimed.
\end{itemize}

Therefore, we can apply Proposition \ref{p:convergence} to conclude that every Picard iteration $\mathbf{a}_{k+1}\in \widehat{T}(\mathbf{a}_k)$ generated by the COPR algorithm starting in $\widehat{W}$ converges linearly to a point $\tilde{\mathbf{a}}\in \widehat{S}$.
Finally, let $\widetilde{\mathbf{w}}\in S$ such that $\tilde{\mathbf{a}} = U^{-1}\widetilde{\mathbf{w}}$. It holds that
$|U\tilde{\mathbf{a}}|^2 = |\widetilde{\mathbf{w}}|^2 = \mathbf{y}$ by the structure of $S$.

The proof is complete.
 \hfill
\end{proof}

\end{document}